\newcommand{\nicefrac}{\frac}
\newcommand{\Z}{\mathbf{Z}}
\newcommand{\R}{\mathbf{R}}
\newcommand{\be}{\begin{equation}}
\newcommand{\ee}{\end{equation}}
\newcommand{\lip}{\text{\rm Lip}_\sigma }
\renewcommand{\P}{\mathrm{P}}
\newcommand{\E}{\mathrm{E}}
\newcommand{\1}{\boldsymbol{1}}
\renewcommand{\d}{{\rm d}}
\newcommand{\e}{{\rm e}}
\renewcommand{\ge}{\geqslant}
\renewcommand{\le}{\leqslant}
\author{Mathew Joseph\\University of Sheffield\and
Davar Khoshnevisan \\University of Utah \and Carl Mueller \\University of Rochester }
\title{Strong invariance and noise-comparison\\principles for some 
	parabolic stochastic PDEs\thanks{%
	Research supported in part by the United States
	National Science Foundation grants DMS-0747758 (M.J.), 
	DMS-1307470 (M.J. and D.K.), and DMS-1102646. (C.M.).}}
\date{April 28, 2014}
\newtheorem{stat}{Statement}[section]
\newtheorem{proposition}[stat]{Proposition}
\newtheorem{corollary}[stat]{Corollary}
\newtheorem{theorem}[stat]{Theorem}
\newtheorem{lemma}[stat]{Lemma}
\theoremstyle{definition}
\newtheorem*{OP}{Open Problem}
\newtheorem{example}[stat]{Example}
\newtheorem{assumption}[stat]{Assumption}
 \numberwithin{equation}{section} %
\begin{document}
\maketitle
\begin{abstract}
We consider a system of interacting diffusions on the integer lattice. By 
letting the mesh size go to zero and by using a suitable scaling, we show that 
the system converges (in a strong sense) to a solution of the stochastic heat 
equation on the real line. As a consequence, we obtain comparison inequalities for 
product moments of the stochastic heat equation with different nonlinearities.
\\	\\
\noindent{\it Keywords:} 
Stochastic PDEs, comparison theorems, white noise.\\

\noindent{\it \noindent AMS 2010 subject classification:}
Primary 60H15; Secondary: 35K57.
\end{abstract}

\section{Introduction} 
We propose to study the solution 
$u:=\{u_t(x)\}_{t\ge 0,x\in\R}$ to the following partial differential 
equation:\footnote{Note that $u_t(x)$ denotes the evaluation of the solution 
at $(t\,,x)$; this is consistent with the standard nomenclature of stochastic 
process theory. To be consistent, we will \underline{never} write $u_t$ in 
place of the time derivative $\partial_t u$ of $u$.}
\[
	\frac{\partial}{\partial t} u_t (x) = (\mathcal{L} u_t)(x) + \sigma(u_t(x))\xi(t\,,x)
	\qquad(t>0,\,x\in\R);
	\tag{SHE}
\]
subject to $u_0(x):=1$ for all $x\in\R$. 
The function $\sigma:\R\to\R$ is assumed to be non-random
and globally Lipschitz continuous. The
forcing term $\xi$ in (SHE)
is \emph{space-time white noise}. That is, $\xi$ is assumed to be
a generalized Gaussian random field
on $\R_+\times\R$ with mean function zero and covariance measure
\begin{equation}
	\textnormal{Cov}(\xi(t\,,x)~,~\xi(s\,,y)) =\delta_0(t-s)\delta_0(x-y).
\end{equation}
Finally, the operator $\mathcal{L}$
acts on the variable $x$ only, and is the fractional Laplacian, on $\R$, of order 
$\alpha$. That is,
\begin{equation}
	\mathcal{L} = -\nu(-\Delta)^{\alpha/2},
\end{equation}
where $\nu>0$ is a parameter that is held fixed throughout this paper
[$\nu$ is the so-called ``viscosity coefficient''].

We are interested only in the cases where the semigroup 
$t\mapsto\exp(-t\mathcal{L})$ is positive and conservative; that is precisely 
when the fundamental solution to the heat operator 
$(\partial/\partial t) -\mathcal{L}$ is a probability density
at all times.  It is 
well known that this property holds if and only if $\alpha\in(0\,,2]$.  
Moreover, in that case, $\mathcal{L}$ is a negative-definite 
pseudo-differential operator with Fourier symbol 
$\hat{\mathcal{L}}(z) = -\nu|z|^\alpha$, where our Fourier transform is 
normalized so that
\begin{equation}
	\hat{f}(z) = \int_{-\infty}^\infty \e^{izx}f(x)\,\d x
	\qquad\text{for all $z\in\R$ and $f\in L^1(\R)$}.
\end{equation}
In probabilistic terms, the preceding means that we can 
identify $\mathcal{L}$ as the generator of a symmetric 
$\alpha$-stable L\'evy process $\{S_t\}_{t\ge 0}$ whose 
characteristic function at time $t\ge 0$ is normalized as
\begin{equation}\label{eq:S}
	\E\exp(izS_t)=
	\exp(-\nu t\vert z\vert^{\alpha})\qquad(t\ge 0,\, z\in\R).
\end{equation}

Even though $\alpha\in(0\,,2]$ always leads us 
to a nice pseudo-differential operator $\mathcal{L}$, we will 
further assume that
\begin{equation}\label{eq:alpha}
	1<\alpha\le 2. 
\end{equation}
It is well known that under the condition \eqref{eq:alpha}, 
(SHE) has an almost surely unique H\"older-continuous solution $u$ that is 
predictable with respect to the 
filtration of the white noise $\xi$ and satisfies
\begin{equation}
	\sup_{t\in[0,T]}\sup_{x\in\R}\E\left(|u_t(x)|^k\right)<\infty
	\qquad\text{for all $T>0$ and $k\ge 2$}.
\end{equation}
See, for example, Dalang \cite{Dalang:99} and Peszat and
Zabczyk \cite{PZ},
and also Foondun and Khoshnevisan \cite{FK} for proofs of these
assertions.
Moreover, if $\alpha\in(0\,,1]$ and $\sigma(u)=1$, then 
(SHE) does not have a function solution \cite{Dalang:99,PZ}.

We may think of $u_t(x)$ as the conditional expected density of a branching
particle system at space-time point $(t\,,x)$ given the white noise $\xi$, where the particles
move in $\R$ as an $\alpha$-stable L\'evy process. Moreover,
these particles interact with one another only through the
random environment $\xi$ and the function $\sigma$; the latter describes the
underlying [non-linear] branching mechanism.

Let $\bar\sigma:\R\to\R$ be a non random Lipschitz continuous function
and suppose $\bar{u}$ solves (SHE) but with $\sigma$ replaced by $\bar\sigma$; that is,
\begin{equation}
	\frac{\partial}{\partial t} \bar{u}_t (x) = (\mathcal{L} 
	\bar{u}_t)(x) + \bar\sigma(\bar{u}_t(x))\xi(t\,,x)
	\qquad(t>0,\,x\in\R),
\end{equation}
subject to the same initial condition as $u$ was; namely,
$\bar{u}_0(x):= 1$ for all $x\in\R$.
One of the goals of this project is to prove that if
$\sigma\le\bar\sigma$ pointwise, then frequently,
\begin{equation}\label{comp}
	\E\left[\prod_{i=1}^m u_t(x_i)\right] \le \E\left[\prod_{i=1}^m \bar{u}_t(x_i)\right], 
\end{equation}
for all $t>0$ and $x_1,\ldots,x_m\in\R$. We will conclude this
fact---see Theorem \ref{th:comp}---from a general approximation result that we describe next.

Let $\mathscr{L}$ denote the generator of a rate $1$, continuous-time random walk on $\Z$.  That is, there is a probability measure $q$ on $\Z$ such that 
\begin{equation}
	(\mathscr{L}f) (m) := \sum_n q(n-m) \big[f(n)-f(m)\big],
\end{equation}
for appropriate functions $f:\Z\to\R$. 
Consider the interacting particle system 
\[
	\d U_t(x) = (\mathscr{L} U_t) (x)\, \d t
	 + \sigma ( U_t (x)  ) \,\d B_t(x) \quad(t>0\,,x\in\Z),
	\tag{SHE$_1$}
\]
subject to $U_0(x)\equiv 1$ for all $x\in\Z$. Here, the
jump rate of the underlying walk is assumed to be one, and $\tilde{\xi}(t\,,x):=
\d B_t(x)/\d t$ denotes space-time white noise on $\R_+\times\Z$;
equivalently, $\{B_\bullet(x)\}_{x\in\Z}$ is a countable family of independent
standard Brownian motions on $\R$, and (SHE$_1$) is understood as an 
infinite system of It\^o stochastic differential equations in the
sense of Shiga and Shimizu \cite{ShigaShimizu}
[see \S\ref{subsec:Mild:DSHE} below for more details]. 

One might expect that \textit{under appropriate conditions on the generator $\mathscr{L}$}, if we first rescale space in (SHE$_1$) so that
$x\in\varepsilon\Z$ instead of $x\in\Z$, and then speed up the
rate of the jumps of the underlying walks suitably, then the resulting rescaled
version of (SHE$_1$) converges to the continuum equation (SHE) as 
$\varepsilon\downarrow 0$. This is an example of [an infinite dimensional]
invariance principle.

This program has been carried out successfully
by Funaki \cite{Funaki}
in the special case that $\mathscr{L}$ denotes the generator of a simple random
walk, where $\nu=1$ and $\R$ is replaced by $[0\,,1]$ in (SHE).

Our main contribution includes an extension of Funaki's program 
in two  directions:
Firstly, we study the full equation on $\R$, in place of $[0\,,1]$,
where compactness problems arise in noteworthy ways. 
Secondly, we establish an invariance principle of a sort that we will 
explain next.

In order to describe our invariance result, let $\pi_\varepsilon$ denote the rescaling
that maps the parameter set $\varepsilon\Z$ to the rescaled parameter set
$\Z$. That is,
\begin{equation}\label{pi:eps}
	(\pi_\varepsilon f)(x) := f(\varepsilon x)\qquad(x\in\Z\,,\varepsilon\in(0\,,1]),
\end{equation}
for all functions $f:\varepsilon\Z\to\R$. Now we consider the semi-discrete
stochastic heat equation (SHE$_1$), rescaled so that $x\in\varepsilon\Z$.
That is, we study the solution $U^{(\varepsilon)}$ to the following infinite system
of stochastic differential equations: For $t>0$ and $x\in\varepsilon\Z$,
\[
	\d U_t^{(\varepsilon)}(x) = \varrho_\varepsilon\times
	\left(\mathscr{L} \left[\pi_\varepsilon U_t^{(\varepsilon)} \right] \right)
	 (x/\varepsilon)\, \d t + \frac{1}{\sqrt\varepsilon}\,\sigma 
	 \left( U_t^{(\varepsilon)}(x) \right)
	\d B_t(x/\varepsilon),
	\tag{SHE$_\varepsilon$}
\]
subject to $U_0^{(\varepsilon)}(x) \equiv 1$ for all $x\in\varepsilon\Z$,
where $\varrho_\varepsilon>0$ is a suitable normalization.

As it turns out, the following is a good choice for our normalization factor:
\begin{equation}\label{varrho}
	\varrho_\varepsilon:=\varepsilon^{-\alpha}
	\qquad(0<\varepsilon\le 1).
\end{equation}
[In fact, this factor is, in a sense, uniquely optimal.]
Our choice for $\varrho_\varepsilon$ is justified by the following corollary to our main theorem Theorem \ref{th:main}, stated in Section \ref{sec:main}.

\begin{corollary}[A weak invariance principle]\label{co:invariance}
	Suppose $\mathscr{L}$ is the generator of a
	symmetric random walk
	$\{X_t\}_{t\ge 0}$ on $\Z$ whose characteristic function satisfies 
	Assumption \ref{ass} below. Then,
	the finite-dimensional distributions of $U^{(\varepsilon)}$---normalized
	via \eqref{varrho}---converge to those
	of $u$ as $\varepsilon\downarrow 0$.
\end{corollary}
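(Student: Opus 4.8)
The plan is to obtain Corollary~\ref{co:invariance} as the ``weak'' shadow of the main result: Theorem~\ref{th:main} will supply, on a common probability space, a coupling of the rescaled system $U^{(\varepsilon)}$ with the continuum solution $u$ --- for instance the natural one in which $B_\bullet(x/\varepsilon)$ is $\varepsilon^{-1/2}$ times the white-noise mass that $\xi$ accumulates on the length-$\varepsilon$ cell of $\varepsilon\Z$ centered at $x$ --- under which, once $\varrho_\varepsilon$ is normalized by \eqref{varrho}, $U^{(\varepsilon)}\to u$ as $\varepsilon\downarrow0$ in a strong (pathwise, or $L^k$) sense, uniformly over compact space-time windows. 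Granting such a statement, I would deduce convergence of the finite-dimensional distributions by a short soft argument, using only path continuity of $u$ and a uniform-in-$\varepsilon$ estimate on the spatial modulus of continuity of $U^{(\varepsilon)}$.

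Concretely, I would identify $U^{(\varepsilon)}$ with the extension to $\R_+\times\R$ appearing in Theorem~\ref{th:main} (say, piecewise-linear interpolation in the space variable), and for $x\in\R$ let $x^{(\varepsilon)}\in\varepsilon\Z$ be a nearest lattice point, so $|x^{(\varepsilon)}-x|\le\varepsilon$. Fixing $m\ge1$, a horizon $T>0$, times $t_1,\dots,t_m\in(0\,,T]$ and points $x_1,\dots,x_m\in\R$, I would then write, for each $i$,
\[
	U^{(\varepsilon)}_{t_i}(x_i)-u_{t_i}(x_i)
	=\bigl(U^{(\varepsilon)}_{t_i}(x_i)-U^{(\varepsilon)}_{t_i}(x_i^{(\varepsilon)})\bigr)
	+\bigl(U^{(\varepsilon)}_{t_i}(x_i^{(\varepsilon)})-u_{t_i}(x_i^{(\varepsilon)})\bigr)
	+\bigl(u_{t_i}(x_i^{(\varepsilon)})-u_{t_i}(x_i)\bigr),
\]
and estimate the three terms separately. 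The middle term tends to $0$ in probability by Theorem~\ref{th:main}, since the finitely many $x_i^{(\varepsilon)}$ all lie in one fixed compact set for small $\varepsilon$. The last term tends to $0$ --- in fact almost surely --- because $u_{t_i}$ is a.s.\ continuous and $x_i^{(\varepsilon)}\to x_i$. The first term tends to $0$ in probability by the uniform (in $\varepsilon$) estimate on the spatial modulus of continuity of $U^{(\varepsilon)}$ that underlies Theorem~\ref{th:main} (for piecewise-linear interpolation the left side is bounded by the oscillation of $U^{(\varepsilon)}_{t_i}$ over a single length-$\varepsilon$ cell). Hence $U^{(\varepsilon)}_{t_i}(x_i)\to u_{t_i}(x_i)$ in probability for each $i$.

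Finally, coordinatewise convergence in probability yields joint convergence in probability of $\bigl(U^{(\varepsilon)}_{t_1}(x_1),\dots,U^{(\varepsilon)}_{t_m}(x_m)\bigr)$ to $\bigl(u_{t_1}(x_1),\dots,u_{t_m}(x_m)\bigr)$, and convergence in probability implies convergence in law; since $m$ and the space-time points were arbitrary, this is precisely the asserted convergence of finite-dimensional distributions. The only step here that is not entirely routine is the passage from lattice points to arbitrary $x\in\R$, which is exactly why the uniform spatial-regularity estimate for $U^{(\varepsilon)}$ is needed. The genuinely hard part lies upstream, in Theorem~\ref{th:main} itself: checking that Assumption~\ref{ass} is what forces $\varrho_\varepsilon(\mathscr{L}[\pi_\varepsilon f])(x/\varepsilon)\to(\mathcal{L}f)(x)$ --- so that $\varrho_\varepsilon=\varepsilon^{-\alpha}$ is the correct, and in a suitable sense unique, normalization --- and controlling the compactness problems that arise because the spatial domain is the non-compact line $\R$. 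Once Theorem~\ref{th:main} is in hand, the present corollary is immediate.
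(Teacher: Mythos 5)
Your proposal is correct and follows the same route the paper intends: the paper offers no separate proof of Corollary \ref{co:invariance}, stating only that it follows ``readily'' from the strong coupling (Theorem \ref{th:main}, or its uniform version Theorem \ref{th:SA}), and your argument is the natural fleshing-out of that remark --- pass from lattice points to arbitrary $x\in\R$, use $L^k$ (hence in-probability) convergence at the lattice points and a.s.\ continuity of $u$, and conclude convergence of finite-dimensional laws from joint convergence in probability.

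One step as written leans on something the paper does not provide: you control $U^{(\varepsilon)}_{t_i}(x_i)-U^{(\varepsilon)}_{t_i}(x_i^{(\varepsilon)})$ by ``the uniform spatial modulus of continuity of $U^{(\varepsilon)}$ that underlies Theorem \ref{th:main}.'' No such estimate underlies the theorem; the only modulus-of-continuity bound in the paper is Lemma \ref{lem:ubar:hold}, which is \emph{temporal}. Fortunately the step is also unnecessary: for your piecewise-linear interpolation, $U^{(\varepsilon)}_{t_i}(x_i)$ is a convex combination of the values at the two neighbouring lattice points $y^\pm$, so
\begin{equation*}
	\left| U^{(\varepsilon)}_{t_i}(x_i)-U^{(\varepsilon)}_{t_i}\big(x_i^{(\varepsilon)}\big)\right|
	\le \sum_{\pm}\left| U^{(\varepsilon)}_{t_i}(y^\pm)-u_{t_i}(y^\pm)\right|
	+\sup_{|z-z'|\le\varepsilon,\ |z-x_i|\le 1}\left| u_{t_i}(z)-u_{t_i}(z')\right|,
\end{equation*}
and both contributions vanish in probability, the first by Theorem \ref{th:main} (whose bound is uniform over $x\in\varepsilon\Z$) and the second by a.s.\ continuity of $u_{t_i}$. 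With that substitution your proof is complete; alternatively, one may simply interpret the finite-dimensional distributions of $U^{(\varepsilon)}$ as those of $\big(U^{(\varepsilon)}_{t_i}(x_i^{(\varepsilon)})\big)_i$ at nearest lattice points, in which case only your middle and last terms are needed.
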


In fact, there is a very strong sense in which ``functional
weak convergence'' takes place; see the strong 
approximation theorem--in the spirit of Cs\"org\H{o} and 
R\'ev\'esz \cite{CsorgoRevesz}---of Theorem \ref{th:SA} below.

In due time, we will show also that Corollary \ref{co:invariance} 
has a number of consequences
in the analysis of the solution to the [continuous] stochastic partial
differential equation (SHE);
see, for example: (i) Theorem \ref{th:SA}, which is a strong approximation
theorem; (ii) Theorem \ref{th:comp}, which is a comparison principle
for the diffusion coefficient $\sigma$ of (SHE); and (iii) Theorem \ref{th:lyapunov},
which establishes  nearly-optimal bounds on the moment Lyapunov
exponents of the solution to (SHE).

Our methods are connected loosely to earlier ideas from the literature 
on the numerical analysis of SPDEs.
See, in particular, Gy\"{o}ngy and Krylov \cite{GK1,GK2,GK3}.

\section{Preliminaries, and main results}
In this section we review some basic facts about SPDEs, mainly in order
to state the notation, and then list the main contributions of the paper.

\subsection{White noise and integration}
It is well-known, and can be verified directly, that our space-time white noise
can be realized as the distributional mixed derivative,
\begin{equation}
	\xi(t\,,x) =  \frac{\partial^2 W(t\,,x)}{\partial t\partial x},
\end{equation}
of a \emph{two-sided Brownian sheet} $W:=\{W(t\,,x)\}_{t\ge 0,x\in\R}$. 
The latter object
is a mean-zero Gaussian process whose covariance function is 
\begin{equation}
	\textnormal{Cov} ( W(t\,,x) \,, W(s\,,y) )
	=\min(s\,,t)\times\min(|x|\,,|y|)\times \1_{(0,\infty)}(xy),
\end{equation}
for all $s,t\ge 0$ and $x,y\in\R$.

Standard results \cite[Chapter 1]{Walsh} imply that $W$ has 
H\"older-continuous sample functions.
Since $W_t:= W(t\,,\bullet)$ is an infinite-dimensional Brownian motion,
[with values say in $C(\R\,,\R)$],
we may then define the Walsh integral process
\begin{equation}
	M_t:=\int_{(0,t)\times \R}Z_s(y)\, W(\d s\,\d y)
	\qquad(t\ge 0),
\end{equation}
in the spirit of an It\^o integral,
as a continuous martingale with
quadratic variation 
\begin{equation} \label{eq:qv}
	\langle M\rangle_t =\int_0^t\d s\int_{-\infty}^\infty\d y\
	[Z_s(y)]^2 \qquad(t\ge 0),
\end{equation}
provided that $Z:=\{Z_t(\bullet)\}_{t\ge 0}$ is
predictable with respect to the Brownian filtration generated by
$\{W_t\}_{t\ge 0}$, and the preceding displayed integral is
finite for all $t\ge 0$.
See Walsh \cite[Chapter 2]{Walsh}. We mention the following variation of
the Burkholder--Davis--Gundy inequality (Theorem 1.1 in \cite{BD}, also see Remark 2.2 in \cite{FK}):  For every
$k\ge 2$ and $t\ge 0$,
\begin{align}
	\|M_t\|_k^2
	\le (4k)^{1/k}\int_0^t \d s\int_{-\infty}^\infty\d y\
	\|Z_s(y)\|_k^2,
	\tag{\textnormal{BDG}}
\end{align}
where, here and throughout, we adopt the standard notation,
\begin{equation}
	\|X\|_\nu := \left\{ \E\left(|X|^\nu\right)\right\}^{1/\nu},
\end{equation}
valid for every $\nu\in[1\,,\infty)$ and $X\in L^\nu(\Omega)$.

\subsection{The mild form of (SHE)}
Define $p_t(x)$ to be the transition density of a symmetric $\alpha$-stable 
L\'evy process. In light of \eqref{eq:S}, this means that
\begin{equation}
	\int_{-\infty}^\infty\e^{ixz}p_t(x)\,\d x=\exp(-\nu t|z|^\alpha)
	\qquad(t>0,z\in\R).
\end{equation}

Because 
$(s\,,t\,,x\,,y)\mapsto p_{t-s}(y-x)$ is the fundamental solution to the heat operator  
$(\partial/\partial t) - \mathcal{L}$, the theory of Walsh 
\cite[Chapter 3]{Walsh} tells us that
we may write (SHE) in mild form as follows:
\begin{equation}\label{mild}
	u_t(x) = 1 + \int_{(0,t)\times\R} p_{t-s}(y-x)\sigma(u_s(y))\,W(\d s\,\d y),
\end{equation}
for all $t>0$ and $x\in\R$. The integral is understood to be a stochastic It\^o-type
integral in the sense of Walsh \cite[Chapter 2]{Walsh}.

\subsection{The mild form of (SHE$_\varepsilon$)}\label{subsec:Mild:DSHE}
Before we write (SHE$_\varepsilon$) in mild form, it might help to recall a few facts about
continuous-time random walks. We use this opportunity to also set forth some
notation. 

Recall that $X:=\{X_t\}_{t\ge 0}$ denotes a rate-one symmetric
continuous-time random walk on $\Z$, whose generator is $\mathscr{L}$ 
and satisfies $X_0=0$.  [We further assume that the random walk satisfies 
the technical Assumption \ref{ass} below. But that assumption is not
germane to the present discussion, and so will be introduced later on.]

Let us define, for every fixed $\varepsilon>0$,
a  random walk $X^{(\varepsilon)}:=\{X^{(\varepsilon)}_t\}_{t\ge 0}$ on 
the rescaled lattice $\varepsilon\Z$ by setting
\begin{equation}\label{eq:X:eps}
	X^{(\varepsilon)}_t := \varepsilon X_{t/\varepsilon^{\alpha}}
	\qquad(t\ge 0).
\end{equation}
We emphasize that $X^{(\varepsilon)}_0=0$ and
$\E X^{(\varepsilon)}_t = 0$ for all $t\ge 0$.
Thus, the random walk $X$ is rescaled to $X^{(\varepsilon)}$, so that
$X^{(\varepsilon)}$ might possibly converge weakly to an $\alpha$-stable 
L\'evy process $S$---see \eqref{eq:S}---as $\varepsilon\downarrow 0$.

The transition probabilities of each random walk $X^{(\varepsilon)}$ are 
\begin{equation}\label{P:eps}
	P^{(\varepsilon)}_t(x) := \P_0\{ X^{(\varepsilon)}_t = x\}
	\qquad(t\ge 0,\, x\in\varepsilon\Z).
\end{equation}

Let $\mathscr{L}^{(\varepsilon)}$ denote the generator of $X^{(\varepsilon)}$,
and choose and fix a bounded measurable function $\phi:\varepsilon\Z\to\R$.
Because 
\begin{equation}
	(\mathscr{L}^{(\varepsilon)}\phi)(x) =\varepsilon^{-\alpha}
	\left(\mathscr{L}(\pi_\varepsilon\phi) \right)(x/\varepsilon)
	\qquad\text{for all $x\in\varepsilon\Z$,}
\end{equation}
we may rewrite (SHE$_\varepsilon$) as 
the following system of It\^o  SDEs: For all $x\in\varepsilon\Z$,
\begin{equation}
	\d U_t^{(\varepsilon)}(x) =\varepsilon^{\alpha}\varrho_\varepsilon\cdot
	 \left( \mathscr{L}^{(\varepsilon)} U_t^{(\varepsilon)} \right)(x) \,\d t
	 + \varepsilon^{-1/2}\,\sigma\left( U_t^{(\varepsilon)}(x) \right)
	\d B_t(x/\varepsilon),
\end{equation}
subject to $U_0^{(\varepsilon)} \equiv 1$. From now on, we adopt the normalization
\eqref{varrho} without further mention. In that case, we have
\begin{equation}\label{eq:SHE'}
	\d U_t^{(\varepsilon)}(x) =
	 \left( \mathscr{L}^{(\varepsilon)} U_t^{(\varepsilon)} \right)(x) \,\d t
	 + \varepsilon^{-1/2}\,\sigma\left( U_t^{(\varepsilon)}(x) \right)
	\d B_t(x/\varepsilon) ,
\end{equation}
for $t>0$ and $x\in\varepsilon\Z$,
subject to $U_0^{(\varepsilon)} \equiv 1$.
It follows from elementary Markov chain theory that $(s\,,t\,,x\,,y)\mapsto
P^{(\varepsilon)}_{t-s}(y-x)$ is the fundamental solution to
the Kolmogorov--Fokker--Planck equation
$(\partial/\partial t)p - \mathscr{L}^{(\varepsilon)}p=0$. 
Therefore, we may rewrite \eqref{eq:SHE'}---equivalently (SHE$_\varepsilon$)---in mild
form as follows:
\begin{equation}\label{mild_d}
	U_t^{(\varepsilon)}(x) = 1 + \varepsilon^{-1/2}
	\sum_{y\in\varepsilon\Z}\int_0^t P_{t-s}^{(\varepsilon)}(y-x)
	\sigma\left( U_s^{(\varepsilon)}(y) \right)\d B_s(y/\varepsilon),
\end{equation}
valid for all $t>0$ and $x\in\varepsilon\Z$. According to Shiga
and Shimizu \cite{ShigaShimizu},
the system \eqref{mild_d} of interacting SDEs has a unique strong solution
$U^{(\varepsilon)}$ that is continuous in $t$ almost surely. We appeal to this fact
tacitly from now on.

\subsection{The main result, and some consequences}\label{sec:main}

We will first state the assumption needed for our main result. 
Let $\mu$ denote the \emph{dislocation distribution} of the random walk $X$ on $\Z$; 
that is, for all $A\subseteq\R$,
\begin{equation}\label{eq:mu:gamma}
	\mu(A):=\P\left\{X_\gamma\in A\right\}, 
	\quad \mbox{where} \quad \gamma :=\inf\{s>0:\, X_s\ne 0\}.
\end{equation}
The Fourier transform $\hat{\mu}$ of $\mu$ is computed easily as
\begin{equation}\label{eq:muhat}
	\hat{\mu}(z):=\sum_{x\in\Z} \e^{iz x} \mu(\{x\}) 
	=\E\exp(izX_1)
	\qquad(-\pi\le z\le \pi).
\end{equation}
We will refer to $\hat\mu$ as the \emph{characteristic function} of
the random walk $X$. It is possible to show that, if
\begin{equation}
	1 - \hat{\mu}(z) = \nu
	|z|^\alpha + o\left(|z|^\alpha\right)\qquad\text{as $|z|\to 0$},
\end{equation}
then the finite-dimensional distributions of the
normalized random walk $X^{(\varepsilon)}$---see \eqref{eq:X:eps}---converge
to those of a symmetric $\alpha$-stable
L\'evy process with transition function $p_t(x)$,
as $\varepsilon\downarrow 0$. We will need to strengthen
the preceding property for our results.
The following assumption restricts a little further the behavior of the characteristic function
of $\mu$ near $0$.
\begin{assumption} \label{ass}
	We assume that the dislocation measure $\mu$ satisfies the following:
	(i) $\{z\in[-\pi\,,\pi]:\ \hat\mu(z)=1\}=\{0\}$; and (ii)
	 There exists $a>0$ such that 
	 \begin{equation}
	 	1-\hat{\mu}(z) = \nu|z|^\alpha + O\left(|z|^{a+\alpha}\right)
		\qquad\text{as $|z|\to 0$}.
	 \end{equation}
\end{assumption}

\begin{example}[$\alpha=2$]\label{ex:alpha=2}
	Recall \cite{Spitzer} that the random walk $X$ is
	called \emph{strongly aperiodic} when
	$|\hat\mu(z)|=1$ if and only if $z$ is an integer multiple of
	$2\pi$. Thus, it follows
	that Assumption \ref{ass}(i) holds for all strongly aperiodic
	random walks. There are also random walks that satisfy Assumption
	\ref{ass}(i) but are not strongly aperiodic. The simple symmetric
	random walk [on $\Z$] is a good example. In fact,
	the simple walk satisfies all of Assumption \ref{ass}, with $\nu=\nicefrac12$. 
	More generally, we now show that if $Y_1=X_{\gamma}$ satisfies $\E(Y_1^2)=2\nu$ and
	$Y_1\in L^{2+\eta}(\Omega)$ for some $\eta>0$, then
	Assumption \ref{ass}(ii) holds for the same value $\nu=\frac12\E(Y_1^2)$. 
	[It might help to recall that the stopping time $\gamma$ was defined earlier
	in \eqref{eq:mu:gamma}.]
	Without loss of generality,
	we may---and will---assume that $\eta\in(0\,,1)$. 
	Equivalently, we are assuming that
	\begin{equation}\label{eq:3-delta}
		\E(|Y_1|^{3-\delta})<\infty\qquad
		\text{for some $\delta\in(0\,,1)$.}
	\end{equation}
	Because $|1-\cos x -(x^2/2)|\le C\min (x^2,|x|^3 )$
	for all $x\in\R$, and since $\E(Y_1^2)=2\nu$, 
	we plug in $x:=zY_1$ in the above inequality and integrate $[\d\P]$ in order to see that
	$| 1 - \hat\mu(z) - \nu z^2| \le Cz^2\E[
	\min( |z||Y_1|^3\,, Y_1^2)],$
	for all $z\in\R$. In particular, we can see that Assumption \ref{ass}
	holds as long as there exists $a>0$ such that
	\begin{equation}\label{ass:p}
		\E\left[ \min\left( |z||Y_1|^3\,, Y_1^2\right)\right] = O\left( |z|^a\right)
		\qquad\text{as $|z|\to 0$}.
	\end{equation}
	Next, let us write, for any value of $\delta\in(0\,,1)$,
	\begin{align}\notag
		\E\left[ \min\left( |z||Y_1|^3\,, Y_1^2\right)\right] &=
			|z|\E\left[|Y_1|^3;\, |Y_1|\le |z|^{-1}\right] + \E\left[
			Y_1^2;\, |Y_1|> |z|^{-1}\right]\\
		&\le |z|^{1-\delta}\E\left(|Y_1|^{3-\delta}\right)+ \E\left[
			Y_1^2;\, |Y_1|> |z|^{-1}\right].
	\end{align}
	We first apply H\"older's inequality and then Chebyshev's inequality
	in order to see that
	\begin{align}\notag
		\E\left[Y_1^2;\, |Y_1|> |z|^{-1}\right] &\le \left[\E\left(|Y_1|^{3-\delta}
			\right)\right]^{2/(3-\delta)}
			\left[\P\{|Y_1|>|z|^{-1}\}\right]^{(1-\delta)/(3-\delta)}\\
		&\le |z|^{1-\delta}\E\left(|Y_1|^{3-\delta}\right).
	\end{align}
	Thus we deduce \eqref{ass:p} from \eqref{eq:3-delta}.
	\qed
\end{example}

\begin{example}[$\alpha<2$]\label{ex:alpha<2}
	There are also random walks on $\Z$ that satisfy Assumption
	\ref{ass} when $1<\alpha<2$. Let us choose and fix $\alpha\in(1\,,2)$
	and define
	\begin{equation}
		\mu(\{j\}) := \frac{1}{2\zeta(\alpha+1)|j|^{\alpha+1}}
		\qquad\text{for $j=\pm1,\pm2,\ldots,$}
	\end{equation}
	where $\zeta$ denotes the zeta function. Evidently,
	$\mu$ defines a symmetric probability measure on $\Z$,
	and
	\begin{equation}
		1-\hat\mu(z) = \frac{1}{\zeta(\alpha+1)}\sum_{j=1}^\infty
		\frac{1-\cos(jz)}{j^{\alpha+1}}\qquad\text{for every $z\ge 0$}.
	\end{equation}
	Assumption \ref{ass}(i) follows readily from this representation. Also,
	a few lines of direct computation show that there exists a finite
	constant $C$ such that
	\begin{equation}
		\sum_{j=1}^\infty\left| \int_j^{j+1}\frac{1-\cos(xz)}{x^{\alpha+1}}\,\d x - 
		\frac{1-\cos(jz)}{j^{\alpha+1}}\right| \le C\max\left(
		z^2\,,z^{1+\alpha}\right),
	\end{equation}
	for all $z\ge 0$. This yields immediately that
	$1-\hat\mu(z) = \nu z^\alpha + O(z^2)$
	as $z\downarrow0$, where
	\begin{equation}
		\nu := \frac{1}{\zeta(\alpha+1)}\int_0^\infty
		\frac{1-\cos x}{x^{\alpha+1}}\,\d x=
		-\frac{\pi}{2\zeta(\alpha+1)\Gamma(\alpha+1)\sin(\alpha\pi/2)};
	\end{equation}
	see \cite[\S4, p.\ 13]{Kh:LCSN} for the details of this last computation.
	Thus, Assumption \ref{ass}(ii) also holds for the preceding
	value of $\nu$ [with $a:=2-\alpha>0$].\qed
\end{example}

The following is our main invariance principle.

\begin{theorem}\label{th:main}
	Suppose $\mathscr{L}$ is the generator of a symmetric random walk
	$\{X_t\}_{t\ge 0}$ on $\Z$ whose  characteristic 
	function satisfies Assumption \ref{ass}. 
	Then there exists a coupling of $\{U^{(\varepsilon)}\}_{\varepsilon\in(0,1)}$ 
	and $u$---all on  a common underlying probability space---such that
	\begin{equation}
		\sup_{t\in[0,T]}\sup_{x\in\varepsilon\Z}\E\left(\left|
		U^{(\varepsilon)}_t(x)-u_t(x) \right|^k\right) = 
		o(\varepsilon^{\rho k/2}) \qquad
		(\varepsilon\downarrow 0),
	\end{equation}
	for every fixed $T>0$, $k\ge 2$, and $\rho\in(0\,, \alpha-1)$.
\end{theorem}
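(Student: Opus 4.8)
The plan is to construct the coupling by slicing the Brownian sheet $W$ into disjoint space--time boxes, and then to run a single Picard--Gronwall estimate on $U^{(\varepsilon)}-u$ through the two mild formulations \eqref{mild} and \eqref{mild_d}. For $\varepsilon\in(0,1)$ and $y\in\varepsilon\Z$ put $I_\varepsilon(y):=[y-\varepsilon/2\,,y+\varepsilon/2)$ and
\[
	B_t(y/\varepsilon):=\varepsilon^{-1/2}\int_{(0,t)\times I_\varepsilon(y)}W(\d s\,\d z)\qquad(t\ge 0).
\]
Since the boxes $\{(0,\infty)\times I_\varepsilon(y)\}_{y\in\varepsilon\Z}$ are disjoint with spatial length $\varepsilon$, the processes $\{B_\bullet(y/\varepsilon)\}_{y\in\varepsilon\Z}$ are i.i.d.\ standard Brownian motions; this realizes the noise of (SHE$_\varepsilon$) on the probability space of $W$, hence couples every $U^{(\varepsilon)}$ with $u$. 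Writing $[z]_\varepsilon$ for the point of $\varepsilon\Z$ with $z\in I_\varepsilon([z]_\varepsilon)$ and $\mathcal{K}^{(\varepsilon)}_r(x\,,z):=\varepsilon^{-1}P^{(\varepsilon)}_r([z]_\varepsilon-x)$, the equation \eqref{mild_d} becomes $U^{(\varepsilon)}_t(x)=1+\int_{(0,t)\times\R}\mathcal{K}^{(\varepsilon)}_{t-s}(x\,,z)\,\sigma(U^{(\varepsilon)}_s([z]_\varepsilon))\,W(\d s\,\d z)$, which I may subtract from \eqref{mild}.

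First I would record the uniform moment bound $\sup_{\varepsilon\in(0,1)}\sup_{t\le T}\sup_{x\in\varepsilon\Z}\E(|U^{(\varepsilon)}_t(x)|^k)<\infty$ from (BDG), the linear growth of $\sigma$, the identity $\sum_w P^{(\varepsilon)}_r(w)^2=P^{(\varepsilon)}_{2r}(0)$ (symmetry of the walk), a uniform local central-limit bound $\varepsilon^{-1}P^{(\varepsilon)}_{2r}(0)\le C\min(\varepsilon^{-1},r^{-1/\alpha})$, and a fractional Gronwall inequality (integrability of $r\mapsto\min(\varepsilon^{-1},r^{-1/\alpha})$ on $[0,T]$, uniformly in $\varepsilon$, is where $\alpha>1$ first enters); the analogous bound for $u$ is quoted in the introduction. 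Then, with $D^{(\varepsilon)}_t(x):=U^{(\varepsilon)}_t(x)-u_t(x)$, I would split the integrand of $D^{(\varepsilon)}_t(x)$ as
\begin{align*}
	\mathcal{K}^{(\varepsilon)}_{t-s}(x,z)\sigma(U^{(\varepsilon)}_s([z]_\varepsilon))-p_{t-s}(z-x)\sigma(u_s(z))
	&=\mathcal{K}^{(\varepsilon)}_{t-s}(x,z)\bigl[\sigma(U^{(\varepsilon)}_s([z]_\varepsilon))-\sigma(u_s([z]_\varepsilon))\bigr]\\
	&\quad+\mathcal{K}^{(\varepsilon)}_{t-s}(x,z)\bigl[\sigma(u_s([z]_\varepsilon))-\sigma(u_s(z))\bigr]\\
	&\quad+\bigl[\mathcal{K}^{(\varepsilon)}_{t-s}(x,z)-p_{t-s}(z-x)\bigr]\sigma(u_s(z)),
\end{align*}
and apply (BDG) and the Lipschitz property of $\sigma$. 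Writing $H_s:=\sup_{x\in\varepsilon\Z}\E(|D^{(\varepsilon)}_s(x)|^k)^{2/k}$, the first line is self-referential and, using $\sum_w P^{(\varepsilon)}_r(w)^2=P^{(\varepsilon)}_{2r}(0)$, produces $C\int_0^t(t-s)^{-1/\alpha}H_s\,\d s$. The second line is controlled by the spatial modulus of continuity of $u$: a short (BDG) estimate gives $\|u_s(x')-u_s(x'')\|_k\le C|x'-x''|^{(\alpha-1)/2}$ (from $\int_0^\infty\int_\R|p_r(\cdot-x')-p_r(\cdot-x'')|^2=C|x'-x''|^{\alpha-1}$, again needing $1<\alpha$), so, since $|[z]_\varepsilon-z|\le\varepsilon/2$, this line contributes $O(\varepsilon^{\alpha-1})$ to $H_t$. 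The third line is bounded, as $\|\sigma(u_s(z))\|_k$ is uniformly bounded, by
\[
	\mathrm{Err}_\varepsilon:=\int_0^T\!\!\d r\sum_{w\in\varepsilon\Z}\varepsilon\,\bigl|\varepsilon^{-1}P^{(\varepsilon)}_r(w)-p_r(w)\bigr|^2
	+\int_0^T\!\!\d r\int_\R\!\d z\,\bigl|p_r([z]_\varepsilon)-p_r(z)\bigr|^2,
\]
the second term of which is again $O(\varepsilon^{\alpha-1})$ by scaling and smoothness of $p$.

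Collecting these bounds gives $H_t\le C\int_0^t(t-s)^{-1/\alpha}H_s\,\d s+C(\varepsilon^{\alpha-1}+\mathrm{Err}_\varepsilon)$ on $[0,T]$; since $1/\alpha<1$, the singular Gronwall lemma yields $H_t\le C_T(\varepsilon^{\alpha-1}+\mathrm{Err}_\varepsilon)$, so the theorem reduces to a bound on the first term of $\mathrm{Err}_\varepsilon$. (That $\rho$ must be strictly below $\alpha-1$ is forced by the two terms of exact order $\varepsilon^{(\alpha-1)/2}$ in $\|D^{(\varepsilon)}_t(x)\|_k$, namely the spatial moduli of $u$ and of $p$.) \textbf{This last estimate --- a quantitative local central limit theorem --- is the main obstacle.} Here I would pass to Fourier variables: by the discrete Plancherel identity and Poisson summation, the first term of $\mathrm{Err}_\varepsilon$ equals, modulo a super-exponentially small aliasing contribution, $\tfrac1{2\pi}\int_0^T\!\d r\int_{-\pi/\varepsilon}^{\pi/\varepsilon}|\exp((r/\varepsilon^\alpha)(\hat\mu(\varepsilon z)-1))-\exp(-\nu r|z|^\alpha)|^2\,\d z$. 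Assumption \ref{ass}(i) gives, by compactness, $\mathrm{Re}(1-\hat\mu(w))\ge c\min(|w|^\alpha,1)$ on $[-\pi,\pi]$, so both exponentials are dominated by $\e^{-cr|z|^\alpha}$, while Assumption \ref{ass}(ii) bounds the difference of exponents by $Cr\varepsilon^{a}|z|^{a+\alpha}$ near the origin; using $|\e^A-\e^B|\le|A-B|(\e^{\mathrm{Re}A}+\e^{\mathrm{Re}B})$ and integrating in $z$ yields, for each $r$, a bound of the form $C\min\{r^{-1/\alpha},\varepsilon^{2a}r^{-(2a+1)/\alpha}\}$, which integrates over $r\in(0,T)$ to a positive power of $\varepsilon$; a short bookkeeping (splitting the $r$-integral at $r\sim\varepsilon^\alpha$ and using $\alpha>1$) then delivers the claimed rate. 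The genuinely delicate point --- the crux of the whole argument --- is the interplay near $r=0$ between this local-CLT error and the singular kernel $(t-s)^{-1/\alpha}$ in the Gronwall step, and this is exactly where the precise form of Assumption \ref{ass} is indispensable.
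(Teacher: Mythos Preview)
Your overall strategy---couple via the Brownian sheet, write both mild solutions as Walsh integrals against $W$, subtract, and close with a (fractional) Gronwall argument after isolating a quantitative local CLT---is exactly the paper's. The paper organizes it as a five-step chain $u\approx u^{(\varepsilon)}\approx v^{(\varepsilon)}\approx V^{(\varepsilon)}\approx W^{(\varepsilon)}\approx \bar U^{(\varepsilon)}$, where the first and fourth steps amount to a time truncation at level $r=\varepsilon^\alpha$, the second and third steps handle (respectively) the spatial discretization of $p$ and the local CLT only for $r>\varepsilon^\alpha$, and the fifth step is the Gronwall closure. Your three-term decomposition plus a singular Gronwall lemma is a more streamlined packaging of the same ingredients, and your Fourier sketch of the local CLT is essentially the paper's Proposition~\ref{pr:locallimit} and Corollary~\ref{co:locallimit}.

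There is, however, a genuine gap in your treatment of $\mathrm{Err}_\varepsilon$. The claim that its second piece $\int_0^T\d r\int_\R|p_r([z]_\varepsilon)-p_r(z)|^2\,\d z$ is $O(\varepsilon^{\alpha-1})$ ``by scaling and smoothness of $p$'' is false: the integral diverges at $r=0$. For $r\ll\varepsilon^\alpha$ and $z\in I_\varepsilon(0)$ one has $p_r([z]_\varepsilon)=p_r(0)\asymp r^{-1/\alpha}$, and this single box already contributes $\asymp\varepsilon\, r^{-2/\alpha}$, which is not integrable on $(0,\varepsilon^\alpha)$ since $2/\alpha\ge 1$. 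The first piece of $\mathrm{Err}_\varepsilon$ suffers the same fate: the $w=0$ summand alone is $\varepsilon\,|\varepsilon^{-1}P^{(\varepsilon)}_r(0)-p_r(0)|^2\asymp\varepsilon\, r^{-2/\alpha}$. The divergence is artificial---it is carried by the inserted middle term $p_r([z]_\varepsilon)$ and cancels between the two halves---but your triangle-inequality split loses that cancellation. The fix is to split the $r$-integral at $r\sim\varepsilon^\alpha$ \emph{before} telescoping: for $r<\varepsilon^\alpha$ use the crude bound
\[
	\int_\R\bigl|\mathcal K^{(\varepsilon)}_r(x,z)-p_r(z-x)\bigr|^2\,\d z
	\le 2\varepsilon^{-1}P^{(\varepsilon)}_{2r}(0)+2p_{2r}(0)
	\le 2\varepsilon^{-1}+Cr^{-1/\alpha},
\]
which integrates to $O(\varepsilon^{\alpha-1})$; for $r>\varepsilon^\alpha$ your two-piece split and the estimates you sketch go through (this is the content of the paper's \S\ref{sec:(2)}--\S\ref{sec:(3)}). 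This short-time splitting is precisely what the paper's Assertions~(1) and~(4) accomplish, and is the one structural point your write-up is missing.
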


We will see also that Theorem \ref{th:main} implies the following,
which in turns implies Corollary \ref{co:invariance} readily
[i.e., without need for  proof]. 

\begin{theorem}[Strong approximations]\label{th:SA} 
	Under the assumptions of Theorem \ref{th:main}, 
	the preceding coupling satisfies the following: For all $M,T>0$
	and $\rho\in(0\,,\alpha-1)$,
	\begin{equation}
		\varepsilon^{-\rho/2}\sup_{t\in(0,T)}\sup_{\substack{x\in\varepsilon\Z:\\
		|x|\le \varepsilon^{-M}}}\left| U^{(\varepsilon)}_t(x)
		-u_t(x)\right| \stackrel{\P}{\longrightarrow}0
		\qquad\text{as }\varepsilon\downarrow 0.
	\end{equation}
\end{theorem}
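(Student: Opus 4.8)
The plan is to deduce Theorem~\ref{th:SA} from the $L^k$ estimate of Theorem~\ref{th:main} by a soft argument combining a union bound with a supremum-tail estimate. First I would fix $M,T>0$ and $\rho\in(0,\alpha-1)$, and pick an auxiliary exponent $\rho'\in(\rho,\alpha-1)$; Theorem~\ref{th:main} then gives $\sup_{t\in[0,T]}\sup_{x\in\varepsilon\Z}\E(|U^{(\varepsilon)}_t(x)-u_t(x)|^k)=o(\varepsilon^{\rho'k/2})$ for every $k\ge2$. The point of choosing $\rho'>\rho$ is to absorb the polynomial factor $\varepsilon^{-M}$ coming from the number of lattice sites in $\{x\in\varepsilon\Z:|x|\le\varepsilon^{-M}\}$, which is of order $\varepsilon^{-M-1}$: by Markov's inequality, for each fixed $t$,
\begin{equation}
	\P\left\{\varepsilon^{-\rho/2}\sup_{|x|\le\varepsilon^{-M}}\left|U^{(\varepsilon)}_t(x)-u_t(x)\right|>\eta\right\}
	\le \eta^{-k}\varepsilon^{-M-1}\varepsilon^{\rho k/2}\cdot o(\varepsilon^{(\rho'-\rho)k/2}),
\end{equation}
which tends to $0$ once $k$ is chosen large enough that $(\rho'-\rho)k/2>M+1$. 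This handles the spatial supremum at a single time.

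The remaining issue is the supremum over $t\in(0,T)$, which is a supremum over an uncountable set, so a bare union bound will not do. Here I would use a modulus-of-continuity / Kolmogorov-type argument: one needs a quantitative bound on $\E(|D_t(x)-D_s(x)|^k)$ in terms of $|t-s|$, where $D:=U^{(\varepsilon)}-u$. Both $u_t(x)$ and $U^{(\varepsilon)}_t(x)$ have mild (Walsh-integral) representations---see \eqref{mild} and \eqref{mild_d}---and the standard moment estimates for such stochastic integrals (via (BDG) and the regularity of $p_t$ and $P^{(\varepsilon)}_t$, together with the uniform moment bounds $\sup_{t\le T}\sup_x\E(|u_t(x)|^k)<\infty$ and its discrete analogue) give a temporal-increment bound of the form $\E(|D_t(x)-D_s(x)|^k)\le C|t-s|^{\theta k}$ for some $\theta>0$, uniformly in $x$ and in $\varepsilon\in(0,1)$. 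Granting this, a discretization of $[0,T]$ into a mesh of size $\varepsilon^{N}$ for large $N$, combined with a chaining estimate to control oscillations between mesh points, upgrades the single-time bound to the uniform-in-$t$ statement, again at the cost of another polynomial-in-$\varepsilon$ factor that is absorbed by taking $k$ large.

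Concretely, the key steps in order are: (1) record the uniform-in-$\varepsilon$ moment bounds for $U^{(\varepsilon)}$ and the temporal H\"older-type increment bound for $D$, both from the mild forms via (BDG); (2) apply Markov's inequality at the mesh points of a fine time-grid together with the union bound over the $O(\varepsilon^{-M-1-N})$ space-time grid points; (3) use the increment bound to pass from the grid to all of $(0,T)\times\{|x|\le\varepsilon^{-M}\}$; (4) choose $k$ large, depending on $M$, $N$, $\theta$, and $\rho'-\rho$, so that all the accumulated negative powers of $\varepsilon$ dominate the positive ones, yielding convergence to $0$ in probability. The main obstacle is step~(1)--(3) done uniformly in $\varepsilon$: one must be sure that the constants in the increment estimate for $D_t(x)-D_s(x)$ do not blow up as $\varepsilon\downarrow0$, which requires the bounds on $P^{(\varepsilon)}_t$ and on $\sum_{y}[\,\cdot\,]^2$-type quantities to be controlled uniformly---exactly the kind of heat-kernel comparison estimates that the proof of Theorem~\ref{th:main} must already establish, so these should be available as by-products. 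Everything past that is bookkeeping with the exponents.
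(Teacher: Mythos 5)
Your proposal follows essentially the same route as the paper's proof: a union bound over a space--time grid of polynomial (in $\varepsilon^{-1}$) cardinality, Kolmogorov-type modulus-of-continuity estimates to control the oscillation between grid times, and a choice of large $k$ to absorb all the accumulated polynomial factors. One small correction to the point you flag as the main obstacle: the temporal increment bound for the discrete field (Lemma \ref{lem:ubar:hold}) is $C\left((t-s)/\varepsilon\right)^{k/2}$, hence \emph{not} uniform in $\varepsilon$; the paper compensates by taking the time mesh of size $\varepsilon^{3}T$, which your free mesh parameter $N$ accommodates in exactly the same way.
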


Using a comparison of moments argument for interacting diffusions, one can 
also obtain the following consequence of Theorem \ref{th:main}; this was alluded
to earlier.

\begin{theorem}[Comparison of moments] \label{th:comp} Let the assumptions of Theorem \ref{th:main} hold. Suppose $\bar{u}$ is the solution to \textnormal{(SHE)}
	subject to $\bar{u}_0\equiv 1$,
	where $\sigma$ is replaced everywhere by $\bar{\sigma}$
	for a function $\bar{\sigma}:\R\to\R$ that is Lipschitz continuous,
	and  $0\le \sigma(z)\le\bar{\sigma}(z)$ for all $z\in\R$. 
	Let us assume further that $\sigma(0)=\bar\sigma(0)=0$.
	Then,
	\begin{equation}
		\E \left[ \prod_{j=1}^m u_t(x_j) \right]
		\le \E \left[ \prod_{j=1}^m \bar{u}_t(x_j)\right],
	\end{equation}
	for all $t\ge 0$ and $x_1,\ldots,x_m\in\R$.
\end{theorem}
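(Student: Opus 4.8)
The plan is to deduce the continuum inequality from its semi-discrete counterpart for the interacting diffusions (SHE$_\varepsilon$) and then pass to the limit via Theorem~\ref{th:main}; at $t=0$ both sides equal $1$, so fix $t>0$. The first point is \emph{nonnegativity}: since $\sigma(0)=\bar\sigma(0)=0$ and the off-diagonal entries of $\mathscr{L}^{(\varepsilon)}$ are nonnegative, the identically-zero configuration solves \eqref{eq:SHE'}, so a standard comparison argument for the infinite It\^o system gives $U^{(\varepsilon)}_t(x)\ge 0$ for all $t\ge0$ and $x\in\varepsilon\Z$, almost surely; the same holds for $\bar U^{(\varepsilon)}$, and (by another appeal to Theorem~\ref{th:main}, or directly) for $u$ and $\bar u$ as well. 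This is what makes the product moments, and all the inequalities below, meaningful.

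The core step is the discrete comparison
\[
  \E\!\left[\prod_{j=1}^m U^{(\varepsilon)}_t(x_j)\right]\le\E\!\left[\prod_{j=1}^m \bar U^{(\varepsilon)}_t(x_j)\right]
  \qquad(x_1,\ldots,x_m\in\varepsilon\Z),
\]
which is a comparison-of-moments statement for a system of interacting diffusions whose diffusion coefficients satisfy $0\le\sigma\le\bar\sigma$, hence $\sigma^2\le\bar\sigma^2$ pointwise on $[0,\infty)$. Applying It\^o's formula to $\prod_j U^{(\varepsilon)}_t(x_j)$ and taking expectations shows that $(t,\vec x)\mapsto\E[\prod_j U^{(\varepsilon)}_t(x_j)]$ satisfies a linear Duhamel equation: the transport part is driven by the positivity-preserving Markov semigroup of the $m$-point motion $\sum_i\mathscr{L}^{(\varepsilon)}_{x_i}$, and the only inhomogeneous term is $\varepsilon^{-1}\sum_{i<k}\1\{x_i=x_k\}\,\E[\sigma(U^{(\varepsilon)}_t(x_i))^2\prod_{j\ne i,k}U^{(\varepsilon)}_t(x_j)]$, which is nonnegative by the previous paragraph. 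Comparing this with the corresponding equation for $\bar U^{(\varepsilon)}$, one wants the barred source terms to dominate; since $\sigma^2\le\bar\sigma^2$, this reduces to comparing $\bar\sigma^2$-weighted product moments of $U^{(\varepsilon)}$ with those of $\bar U^{(\varepsilon)}$. Because $\sigma$ is merely Lipschitz the moment hierarchy does not close, so this is not a finite induction on $m$; instead one argues by a monotone Picard/Duhamel iteration on a suitable extended family of (weighted) product moments, checking at each stage that the nonnegative kernels involved, together with $\sigma^2\le\bar\sigma^2$, propagate the ordering, and then passing to the limit in the iteration. This step---a comparison-of-moments argument for interacting diffusions, in the spirit of classical results for such systems---is the main obstacle: the verification that nonnegativity and the pointwise bound $\sigma^2\le\bar\sigma^2$ really do propagate through the iteration is where the technical work lies.

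Finally, let $\varepsilon\downarrow0$. By Theorem~\ref{th:main}, applied to both $\sigma$ and $\bar\sigma$, we have $\sup_{x\in\varepsilon\Z}\E(|U^{(\varepsilon)}_t(x)-u_t(x)|^k)\to0$ and $\sup_{x\in\varepsilon\Z}\E(|\bar U^{(\varepsilon)}_t(x)-\bar u_t(x)|^k)\to0$ for every $k\ge2$. Given $x_1,\ldots,x_m\in\R$, pick $x_j^{(\varepsilon)}\in\varepsilon\Z$ with $|x_j^{(\varepsilon)}-x_j|\le\varepsilon$; using the triangle inequality in $L^{km}(\Omega)$, the generalized H\"older inequality to factor the products, the uniform moment bounds for $u$ recalled in the Introduction, and the $L^{km}(\Omega)$-continuity of $x\mapsto u_t(x)$, one obtains $\E[\prod_{j=1}^m U^{(\varepsilon)}_t(x_j^{(\varepsilon)})]\to\E[\prod_{j=1}^m u_t(x_j)]$ and, in the same way, $\E[\prod_{j=1}^m \bar U^{(\varepsilon)}_t(x_j^{(\varepsilon)})]\to\E[\prod_{j=1}^m \bar u_t(x_j)]$. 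Letting $\varepsilon\downarrow0$ in the discrete comparison of the previous paragraph then gives the theorem.
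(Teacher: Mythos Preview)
Your overall strategy---prove the comparison for the semi-discrete system (SHE$_\varepsilon$) and then pass to the limit via Theorem~\ref{th:main}---is exactly the paper's approach, and your treatment of nonnegativity and of the $\varepsilon\downarrow0$ passage is correct and in fact more explicit than the paper's.

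The one substantive difference concerns the discrete comparison step. The paper does not attempt to prove it from scratch; it simply invokes Theorem~1 of Cox, Fleischmann, and Greven \cite{CFG}, which is precisely the ``comparison-of-moments argument for interacting diffusions'' you allude to as a classical result. Your It\^o/Duhamel sketch is heading in the right direction, but as you yourself flag, the hierarchy does not close and the phrase ``monotone Picard/Duhamel iteration on a suitable extended family of (weighted) product moments'' is not a proof---it is a description of where a proof would have to live. Citing \cite{CFG} closes this gap cleanly and is what the paper does.

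One further point you omit: the paper begins with a time-rescaling argument ($v_t:=u_{\beta t}$) showing that the conclusion for a given $\alpha$ and \emph{some} $\nu>0$ implies it for \emph{every} $\nu>0$. This lets one fix a particular random walk (from Examples~\ref{ex:alpha=2} or~\ref{ex:alpha<2}) satisfying Assumption~\ref{ass} for a specific $\nu$, rather than having to produce one for the $\nu$ appearing in the given (SHE). Under the hypothesis as literally stated (``Let the assumptions of Theorem~\ref{th:main} hold'') your argument is fine without this, but the scaling step is what makes the hypothesis non-vacuous for arbitrary $\nu$.
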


The following theorem gives upper and lower bounds on the exponential rate of growth of the moments (Lyapunov-exponents) of the stochastic heat equation when the nonlinearity is ``roughly" linear.

\begin{theorem}[Lyapunov-exponent bounds] \label{th:lyapunov}
	Suppose $u$ solves \textnormal{(SHE)} with 
	$\mathcal{L}f=\nu f''$ subject to initial profile
	$u_0\equiv 1$.
	If there exists nonnegative constants 
	$0\le {\rm L}_{\sigma}\le \lip<\infty$ such that ${\rm L}_\sigma 
	\vert z\vert \le  \sigma(z) \le \lip \vert z\vert $ for all $z$ , then 
	for all integers $k\ge 2$ and reals $x\in\R$,
	\begin{equation}\begin{split}
		&\frac{{\rm L}_\sigma^4 k(k^2-1)}{48\nu}
			\le \liminf_{t\to\infty} \frac1t \log\E\left(|u_t(x)|^k\right) \\
		&\hskip1.7in 
			\le \limsup_{t\to\infty} \frac1t \log\E\left(|u_t(x)|^k\right) 
			\le \frac{{\rm Lip}_\sigma^4 k^3}{\nu}.
	\end{split}\end{equation}
\end{theorem}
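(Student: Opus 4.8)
The plan is to extract both bounds from the comparison principle of Theorem \ref{th:comp} together with known Lyapunov-exponent estimates for the \emph{parabolic Anderson model}, i.e.\ the linear case $\sigma(z)=\lambda z$. Recall that when $\mathcal{L}f=\nu f''$ and $\sigma(z)=\lambda z$, the $k$-th moment $\E(|u_t(x)|^k)$ of the solution to (SHE) with flat initial data can be computed via a Feynman--Kac / local-time representation: one has the classical identity $\E\bigl[\prod_{j=1}^k u_t(x_j)\bigr]=\E\exp\!\bigl(\lambda^2\sum_{1\le i<j\le k}L_t^{(i,j)}\bigr)$, where the $L_t^{(i,j)}$ are the collision local times of $k$ independent Brownian motions run at diffusivity $2\nu$. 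Evaluating at $x_1=\cdots=x_k=x$ and using the known exact asymptotics for this exponential moment (Bertini--Cancrini, or Chen's book on random walk intersections) yields $\lim_{t\to\infty}t^{-1}\log\E(|u_t(x)|^k)=\lambda^4 k(k^2-1)/(48\nu)$ on the nose. This constant is exactly the lower bound in the statement, with $\lambda$ replaced by ${\rm L}_\sigma$; and it serves as an explicit upper bound (with $\lambda=\lip$) that is slightly weaker than the claimed $\lip^4 k^3/\nu$, since $k(k^2-1)/48\le k^3$.

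First I would record the linear-case result as an ingredient: for $\sigma(z)=\lambda z$, $u_0\equiv1$, $\mathcal{L}f=\nu f''$,
\begin{equation}\label{eq:PAM-asymp}
	\lim_{t\to\infty}\frac1t\log\E\bigl(|u_t(x)|^k\bigr)=\frac{\lambda^4 k(k^2-1)}{48\nu}
	\qquad(k\ge 2,\ x\in\R).
\end{equation}
Then I would apply Theorem \ref{th:comp} twice. For the lower bound, take $\sigma$ as in the hypothesis and $\bar\sigma(z):={\rm L}_\sigma z$; since $0\le\bar\sigma(z)\le\sigma(z)$ for $z\ge0$ and $\sigma(0)=\bar\sigma(0)=0$, Theorem \ref{th:comp} (applied with the roles of $\sigma,\bar\sigma$ interchanged, using $x_1=\cdots=x_m=x$) gives $\E(|\bar u_t(x)|^k)\le\E(|u_t(x)|^k)$; combined with \eqref{eq:PAM-asymp} for $\lambda={\rm L}_\sigma$ this produces $\liminf_t t^{-1}\log\E(|u_t(x)|^k)\ge {\rm L}_\sigma^4 k(k^2-1)/(48\nu)$. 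For the upper bound, take instead $\bar\sigma(z):=\lip z$, so that $0\le\sigma(z)\le\bar\sigma(z)$ for $z\ge0$; Theorem \ref{th:comp} now gives $\E(|u_t(x)|^k)\le\E(|\bar u_t(x)|^k)$, and \eqref{eq:PAM-asymp} for $\lambda=\lip$ yields $\limsup_t t^{-1}\log\E(|u_t(x)|^k)\le \lip^4 k(k^2-1)/(48\nu)\le\lip^4 k^3/\nu$. Chaining these two inequalities with the trivial $\liminf\le\limsup$ completes the proof.

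There is one technical gap to close: Theorem \ref{th:comp} as stated requires the solution $u$ to be nonnegative-valued (so that the product-moment comparison is meaningful and $\sigma(u_s(y))$ stays in the regime where $0\le\sigma\le\bar\sigma$ is the relevant comparison), and one needs to know that the flat initial condition $u_0\equiv1$ together with $\sigma(0)=0$ forces $u_t(x)\ge0$ almost surely for all $t,x$. I expect the main obstacle to be exactly this positivity issue and the associated bookkeeping: one must verify that the hypotheses of Theorem \ref{th:comp} genuinely apply in both directions — in particular that $\sigma(0)={\rm L}_\sigma\cdot 0={\lip}\cdot0=0$ is compatible, and that the comparison is not vacuous because $u,\bar u\ge0$. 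This follows from a standard comparison-with-zero argument (since $\sigma(0)=0$, the process $v\equiv0$ solves the equation with the same noise, and a pathwise or moment comparison forces $u\ge0$ given $u_0\equiv1\ge0$), so it is routine but must be invoked. A secondary point is citing \eqref{eq:PAM-asymp}: the sharp constant $k(k^2-1)/48$ for the Brownian collision local time exponential moment is available in the literature (Bertini--Cancrini for $k=2$, and the general $k$ case via Chen's intersection-local-time large deviations), and I would cite it rather than reprove it.
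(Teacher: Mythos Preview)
Your lower-bound argument is essentially the paper's: compare with the parabolic Anderson model $\sigma(z)={\rm L}_\sigma z$ via Theorem~\ref{th:comp} and then invoke a lower bound on the Anderson moments. The paper cites \cite[Theorem~6.4]{Kh:CBMS} for the pointwise inequality $\E(|X_t(x)|^k)\ge\exp\bigl({\rm L}_\sigma^4 k(k^2-1)t/(48\nu)\bigr)$, which is all that is needed; note that only this one-sided bound is used, not the full limit.

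Your upper-bound argument, however, has a genuine gap. You rely on the exact Kardar asymptotic $\lim_{t\to\infty} t^{-1}\log\E(|u_t(x)|^k)=\lambda^4 k(k^2-1)/(48\nu)$ for the linear equation with flat initial data, proposing to cite Bertini--Cancrini or Chen. But the paper itself (see the first Open Problem immediately following Theorem~\ref{th:lyapunov}) explicitly disputes that this identity is rigorously available for $u_0\equiv1$: it states that the Bertini--Cancrini argument ``is false because of an incorrect appeal to the Skorohod lemma,'' and that Borodin--Corwin establish \eqref{eq:Kardar} only for delta initial data. So the upper half of your claimed limit is precisely what is missing from the literature at the time of writing; you cannot simply cite it. The paper avoids this entirely: its upper bound does \emph{not} go through comparison with the Anderson model at all, but quotes the direct moment estimate of \cite[Example~2.9]{FK}, which yields $\limsup_{t\to\infty} t^{-1}\log\E(|u_t(x)|^k)\le{\rm Lip}_\sigma^4 k^3/\nu$ for any Lipschitz $\sigma$ with $|\sigma(z)|\le{\rm Lip}_\sigma|z|$. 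This is exactly why the stated upper bound carries the cruder constant $k^3$ rather than $k(k^2-1)/48$: the sharper constant would indeed follow from your route once \eqref{eq:Kardar} is established for flat data, and the paper flags that as open.

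The positivity issue you worry about is already absorbed into the proof of Theorem~\ref{th:comp} (via \cite{GeissManthey,Mueller,GJKMS}), so it is not an additional obstacle here.
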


Ivan Corwin and Roger Tribe have independently asked us whether one
can transfer the knowledge of Lyapunov exponents for linear SPDEs
to nonlinear ones [private communications]. The preceding answers their
question in the affirmative.

Let us conclude this discussion by stating two open problems.

\begin{OP}
	Consider the parabolic Anderson model; that is, 
	(SHE) with $\sigma(x)=\lambda x$, where $\lambda\in\R$ is a fixed constant.
	According to a predication of Kardar \cite{Kardar}, one would expect that
	\begin{equation}\label{eq:Kardar}
		\lim_{t\to\infty}\frac1t \log\E\left(|u_t(x)|^k\right)
		=\frac{\lambda^4k(k^2-1)}{48\nu}\qquad\text{for every $x\in\R$}.
	\end{equation}
	[The left-hand side is independent of $x$, actually; see the proof of Lemma 18
	in Dalang \cite{Dalang:99}.] Bertini and Cancrini \cite{BC} have invented
	a rigorous Feynman--Kac moment formula, which suggests the Kardar formula
	\eqref{eq:Kardar} though their claim to proof of \eqref{eq:Kardar} is false
	because of an incorrect appeal to the Skorohod lemma. As far as we know
	a rigorous proof of \eqref{eq:Kardar} is still not available. 
	Our Theorem \ref{th:comp} shows that when ${\rm L}_\sigma \vert z\vert \le \sigma(z)\le {\rm Lip}_{\sigma} \vert z\vert$ for all $z$,
	\begin{equation}\label{AAB}
		\frac{({\rm L}_\sigma^4+o(1))k(k^2-1)}{48\nu}
		\le \frac1t\log \E\left(|u_t(x)|^k\right) \le \frac{%
		({\rm Lip}_\sigma^4+o(1))k^3}{\nu},
	\end{equation}
	as $t\to\infty$.
	In the case that the initial profile is a point mass [instead of $u_0(x)\equiv1$,
	as is the case here],
	Borodin and Corwin \cite[\S4.2]{BorodinCorwin} have succeeded in 
	proving \eqref{eq:Kardar}. Can their methods be used to establish
	\eqref{eq:Kardar} when $u_0(x)\equiv 1$? If so, then comparison methods of this paper will improve \eqref{AAB}
	to the following, which is the best possible bound of its type:
	As $t\to\infty$.
	\begin{equation}
		\frac{({\rm L}_\sigma^4+o(1))k(k^2-1)}{48\nu}
		\le \frac1t\log \E\left(|u_t(x)|^k\right) \le 
		\frac{({\rm Lip}_\sigma^4+o(1))k(k^2-1)}{48\nu}.
	\end{equation}
\end{OP}

\begin{OP} 
	For a more challenging problem consider (SHE) in the case that
	$\lim_{z\to\infty}\sigma(z)/z=\lambda>0$; this is an
	``asymptotic Anderson model.'' Is it true that the
	$\lim_{t\to\infty}t^{-1}\sup_{x\in\R}\log\E(|u_t(x)|^k)$
	exists for some integer $k\ge 2$? Theorem 2.10 of Foondun and Khoshnevisan
	\cite{FK} suggests that this might be the case provided that the initial
	profile is a bounded and measurable non-random function that satisfies
	$\inf_{x\in\R}u_0(x)\ge\eta$ for a sufficiently large $\eta>0$. However,
	a proof, or disproof, is completely missing.
\end{OP}

\section{Proof of Theorem \ref{th:main}}\label{sec:outline}
Let us say that $X^{(\varepsilon)}\approx Y^{(\varepsilon)}$ whenever
$X^{(\varepsilon)}:=\{X^{(\varepsilon)}_t(x)\}$ and 
$Y^{(\varepsilon)}:=\{Y^{(\varepsilon)}_t(x)\}$ 
are space-time random fields that are indexed as $t>0$ and 
$x\in A_\varepsilon\subset\R$,
and satisfy the following: 
\begin{equation}
	\sup_{t\in[0,T]}\sup_{x\in A_\varepsilon}
	\left\| X^{(\varepsilon)}_ t(x)-Y^{(\varepsilon)}_t(x)\right\|_k
	=o( \varepsilon^{\rho/2})
	\qquad(\varepsilon\downarrow 0),
\end{equation}
for all $T>0$, $k\ge 2$ and $0<\rho<\alpha-1$.

Our surrogates from $X^{(\varepsilon)}$ and $Y^{(\varepsilon)}$
will be selected from a list of 5 random fields. The first 4 are:
\begin{align}
	u^{(\varepsilon)}_t(x) &:=1+
		\int_{(0,t-\varepsilon^{\alpha})\times\R}p_{t-s}(y-x)
		\sigma\left(u_s\big(\varepsilon[y/\varepsilon]\big)\right) W(\d s\,\d y);\\
		\notag
	v^{(\varepsilon)}_t(x) &:=1+
		\int_{(0,t-\varepsilon^{\alpha})\times\R}p_{t-s}\left(
		\varepsilon[y/\varepsilon]-\varepsilon[x/\varepsilon]\right)
		\sigma\left(u_s\big(\varepsilon[y/\varepsilon]\big)\right) W(\d s\,\d y);\\\notag
	V_t^{(\varepsilon)}(x) &:= 1+
		\varepsilon^{-1}\int_{(0,t-\varepsilon^{\alpha})\times\R}
		P_{t-s}^{(\varepsilon)}\left(\varepsilon[y/\varepsilon]-\varepsilon[x/\varepsilon]\right)
		\sigma\left(u_s\big(\varepsilon[y/\varepsilon]\big)\right) W(\d s\,\d y);\\\notag
	W_t^{(\varepsilon)}(x)&:= 1+
		\varepsilon^{-1}\int_{(0,t)\times\R}P_{t-s}^{(\varepsilon)}
		\left(\varepsilon[y/\varepsilon]-\varepsilon[x/\varepsilon]\right)
		\sigma\left(u_s\big(\varepsilon[y/\varepsilon]\big)\right) W(\d s\,\d y),
\end{align}
where $[w]:=$ the greatest integer $\le w$ when $w\ge 0$, and
$[w]:=$ the smallest integer $\ge w$ when $w\le 0$.

There is a fifth random field $\bar{U}^{(\varepsilon)}:=
\{\bar{U}^{(\varepsilon)}_t(x)\}_{t\ge 0,x\in\varepsilon\Z}$ of interest to us; that 
is the solution to an infinite system of It\^o stochastic differential equations that is defined by
\begin{equation}\label{eq:ubar}
	\bar{U}^{(\varepsilon)}_t(x)=1+\varepsilon^{-1/2}
	\sum_{j=-\infty}^{\infty}\int_0^{t} P_{t-s}^{(\varepsilon)}
	\left(j\varepsilon -x\right)\sigma\left(\bar{U}_s^{(\varepsilon)}( j\varepsilon)\right)
	\d B_s^{(j,\varepsilon)}, 
\end{equation}
where
\begin{equation}
	B_t^{(j,\varepsilon)}:=\varepsilon^{-1/2}
	\int_{(0,t)\times (j\varepsilon,\,(j+1)\varepsilon)}W(\d s\,\d y). 
\end{equation}

Elementary properties of the Wiener integral
imply that $\lbrace B_t^{(j,\varepsilon)}\rbrace_{t\ge0,\, j\in \Z}$ is a 
[relabeled] version of the Brownian motions $\lbrace B_t(x)\rbrace_{t\ge0,\, x\in \Z}$
of the Introduction. As such, the theory of Shiga and Shimizu \cite{ShigaShimizu}
assures us that the infinite system \eqref{eq:ubar} of It\^o stochastic ODEs
has a unique strong solution. Moreover, 
the strong existence and uniqueness of the solutions to \eqref{eq:SHE'} 
and \eqref{eq:ubar} together imply that 
$\lbrace\bar{U}^{(\varepsilon)}_t(x)\rbrace_{t\ge0,\, x\in \varepsilon\Z}$ 
is a modification of $\lbrace U^{(\varepsilon)}_t(x)\rbrace_{t\ge0,\, x\in \varepsilon\Z}$. 
Thus, Theorem \ref{th:main} is a quantitative way to say that
$u\approx\bar{U}^{(\varepsilon)}$,
and follows once we verify the following chain of successive
approximations (with $A_{\varepsilon}=\varepsilon\Z$):
\begin{enumerate}
	\item[\bf (1).] $u\approx u^{(\varepsilon)}$;
	\item[\bf (2).] $u^{(\varepsilon)}\approx v^{(\varepsilon)}$;
	\item[\bf (3).] $v^{(\varepsilon)}\approx V^{(\varepsilon)}$;
	\item[\bf (4).] $V^{(\varepsilon)}\approx W^{(\varepsilon)}$; and
	\item[\bf (5).] $W^{(\varepsilon)}\approx \bar{U}^{(\varepsilon)}$.
\end{enumerate}

We now proceed to prove the theorem using the outlined five
steps. Henceforth,
all unimportant finite constants are denoted by
$C$. These constants might vary from line to line, and might depend only on 
the parameters $(T\,,\nu\,,\alpha)$.

\subsection{Verification of Assertion (1)} \label{sec:(1)}
We bound the mean square difference between $u_t(x)$ and 
$u_t^{(\varepsilon)}(x)$, using (BDG), as follows:
\begin{align} \label{eq:u-ue}\notag
	& \left\| u_t(x)-u_t^{(\varepsilon)}(x)\right\|_k^2\\
	& \le 2\cdot (4k)^{1/k} \int_{t-\varepsilon^{\alpha}}^t \d s
		\int_{-\infty}^\infty  \d y\ |p_{t-s}(y-x)|^2\left\|
		\sigma\left(u_s(y)\right)\right\|_k^2 \\
		\notag
	&\hspace{0.75cm}+ 2\cdot(4k)^{1/k}
		\int_{0}^{t-\varepsilon^{\alpha}} \d s \int_{-\infty}^\infty\d y\ 
		|p_{t-s}(y-x)|^2\left\|
		\sigma\left(u_s(y)\right)-\sigma\left(u_s\big(\varepsilon[y/\varepsilon]\big)\right)
		\right\|_k^2 \\\notag
	& \le C\int_0^{\varepsilon^{\alpha}}\d s \int_{-\infty}^\infty\d y\ |p_s(y)|^2
		+C\int_0^t \d s \int_{-\infty}^\infty\d y\ 
		|p_{t-s}(y)|^2\sup_{|z-z'|\le \varepsilon} \left\|
		u_s(z)-u_s(z') \right\|_k^2.
\end{align}
In the final step we have used the 
facts that: (i)  $\sigma$ is a Lipschitz continuous 
function; and (ii) 
$\sup_{0\le s\le T} \E(\vert u_s(y)\vert^k) <\infty$.
By the semigroup property and the inversion theorem of Fourier transforms,
\begin{equation} \label{eq:1:rmost:1}
	 \int_{-\infty}^\infty |p_s(y)|^2\,\d y
	 = (p_s*p_s)(0) = p_{2s}(0)=
	 \frac{1}{2\pi} \int_{-\infty}^\infty \e^{-2\nu \vert z\vert^{\alpha}s}
	 \,\d z=\frac{C}{s^{1/\alpha}}.
\end{equation}
Therefore, 
\begin{equation}\label{eq:1:rmost}
	\int_0^{\varepsilon^{\alpha}}\d s \int_{-\infty}^\infty\d y\ |p_s(y)|^2
	\le C\varepsilon^{\alpha-1}.
\end{equation}
This estimates the first term on the right-most side 
of \eqref{eq:u-ue}.

 Now we bound the second term. 
Thanks to the It\^o isometry \eqref{eq:qv} and the uniform bound 
of the $k$th moment of $u$, we conclude that whenever $|z-z'|\le\varepsilon$,
\begin{equation}
	\left\| u_s(z)-u_s(z')\right\|_k^2
	\le C\int_0^s\d r
	\int_{-\infty}^\infty\d w\left[p_{s-r}(w-z)-p_{s-r}(w-z')\right]^2.
\end{equation}
Therefore, Plancherel's identity yields the bound
\begin{align}\notag
	\left\|u_s(z)-u_s(z')\right\|_k^2 &\le C \int_0^s \d r
		\int_{-\infty}^\infty\d\zeta\ 
		\e^{-2\nu(s-r)\vert \zeta\vert^{\alpha}} \big\vert
		\e^{i(z-z')\zeta}-1\big\vert^2\\
		\notag
	&\hskip-.2in\le C\e^{2\nu s}\int_0^s \d r
		\int_0^\infty\d\zeta\ 
		\e^{-2\nu r(1+|\zeta|^{\alpha})} (1\wedge |z-z'|\zeta)^2\\
	&\hskip-.2in\le C\int_0^\infty\frac{(1\wedge 
		|z-z'|\zeta)^2}{1+\zeta^\alpha}\,\d \zeta.
\end{align}
We split the final integral according to whether or not
$\zeta<|z-z'|^{-1}$ and compute directly to find that
the preceding integral is at most $C |z-z'|^{\alpha-1}$.
Therefore, \eqref{eq:1:rmost:1} yields the inequality,
\begin{equation}\label{eq:2:rmost}
	\int_0^t\d s \int_{-\infty}^\infty\d y\ 
	|p_{t-s}(y)|^2\sup_{0\le z,z'\le \varepsilon} \left\|
	u_s(z)-u_s(z')\right\|_k^2 \le C\varepsilon^{\alpha-1}.
\end{equation}
Because $\alpha-1>\rho$,
Assertion (1) follows from \eqref{eq:u-ue},
\eqref{eq:1:rmost}, and \eqref{eq:2:rmost}.\qed

\subsection{Verification of Assertion (2)} \label{sec:(2)}
For every integer $k\ge 2$, the $k$th moment of $\sigma(u_s(y))$ is finite uniformly 
for all $y\in\R$ and $s\in[0\,,T]$. Therefore,
It\^o's isometry  yields
\begin{equation}
	\left\| u_t^{(\varepsilon)}(x)-v_t^{(\varepsilon)}(x)
	\right\|_k^2\le 
	C\int_{\varepsilon^{\alpha}}^t \d s \int_{-\infty}^\infty \d y\,
	\left[p_s(y)-p_s\left(\varepsilon[y/\varepsilon]\right)\right]^2
\end{equation}
It is well known that the stable probability density $p_1$ is $C^\infty$ 
and satisfies 
 \begin{equation}\label{eq:p'}
 	|p_1'(x)| \le \frac{C}{1+|x|^{\alpha+1}}\qquad\text{for all $x\in\R$}.
\end{equation}
When $1<\alpha<2$, this inequality is classical (see
G\l{}owacki \cite[Corollary (5.30)]{Glowacki} for some of the more recent
developments for stable densities over homogeneous spaces). 
When $\alpha=2$,  one has
$|p_1'(x)| \le C(1+|x|^3)^{-1}$ by explicit computation.
Consequently, scaling reveals that for all $\alpha\in(1\,,2]$, $s>0$,
and $x\in\R$,
\begin{equation}\label{eq:p':1}
	|p'_s(x)| =\left| s^{-2/\alpha}
	p_1'\left(\frac{x}{s^{1/\alpha}}\right)\right| \le \frac{Cs^{(\alpha-1)/\alpha}}{
	s^{(\alpha+1)/\alpha} +\vert x\vert^{\alpha+1}}.
\end{equation}
 With the above bound in hand, we obtain
\begin{equation}\begin{split}
	 &\left\| u_t^{(\varepsilon)}(x)-v_t^{(\varepsilon)}(x)\right\|_k^2\\
	&\le C\int_{\varepsilon^\alpha}^t\d s\int_0^\infty\d y\
		\left| \int_{(y-\varepsilon)_+}^y \frac{s^{(\alpha-1)/\alpha}}{
		s^{(\alpha+1)/\alpha} + x^{\alpha+1}}\, \d x\right|^2\\
	& \le C\varepsilon^2\int_{\varepsilon^\alpha}^t\d s\int_0^\infty\d y\
		\left| \frac{s^{(\alpha-1)/\alpha}}{
		s^{(\alpha+1)/\alpha} + (y-\varepsilon)_+^{\alpha+1}}\right|^2\\
	&=C\varepsilon^3\int_{\varepsilon^\alpha}^t \left| \frac{s^{(\alpha-1)/\alpha}}{
		s^{(\alpha+1)/\alpha}}\right|^2\,\d s +
		C\varepsilon^2\int_{\varepsilon^\alpha}^t\d s\int_0^\infty\d w\
		\left| \frac{s^{(\alpha-1)/\alpha}}{
		s^{(\alpha+1)/\alpha} + w^{\alpha+1}}\right|^2.
\end{split}\end{equation}
By scaling, the latter $\d w$-integral is equal to $C s^{-3/\alpha}$, whence
\begin{equation}\begin{split}
	 \left\| u_t^{(\varepsilon)}(x)-v_t^{(\varepsilon)}(x)\right\|_k^2
		&\le C\varepsilon^3\int_{\varepsilon^\alpha}^\infty \frac{\d s}{s^{4/\alpha}}+
		C\varepsilon^2\int_{\varepsilon^\alpha}^\infty\frac{\d s}{s^{3/\alpha}}\\
	&\le C\varepsilon^{\alpha-1}.
\end{split}\end{equation}
Because $\alpha-1>\rho$, this completes our demonstration of Assertion (2).\qed

\subsection{Verification of Assertion (3)} \label{sec:(3)}
The proof of Assertion (3) requires a certain form of a
local limit theorem for continuous-time random walks that
are in the domain of attraction of a symmetric $\alpha$-stable
L\'evy processes. As a first step in this direction we develop a
suitable quantitative form of a local central limit theorem for
continuous-time random walks that lie in the domain of attraction of
a symmetric $\alpha$-stable L\'evy process and satisfy the slightly stronger
Assumption \ref{ass}.

\begin{proposition}[A local CLT]\label{pr:locallimit}
	For all $T>0$ there exist positive and finite constants $K,C,\varepsilon_0$ such that 
	\begin{equation}\begin{split}
		&\sup_{x\in\varepsilon\Z} \left| 
			\frac{P_t^{(\varepsilon)}(x)}{\varepsilon}-  p_t(x) \right|\\
		&\hskip.8in\le C\times
			\begin{cases}\displaystyle
				\frac{\varepsilon^a
					|\ln \varepsilon|^{(a+\alpha)/\alpha}}{t^{(a+1)/\alpha}}
					&\text{if $t\ge K\varepsilon^\alpha|\ln\varepsilon|^{(a+\alpha)/a}$},\\\\
				t^{-1/\alpha}+\varepsilon^{-1}&\text{otherwise,}
			\end{cases}
	\end{split}\end{equation}
	uniformly for all $t\in(0\,,T]$ and $\varepsilon\in(0\,,\varepsilon_0)$.
\end{proposition}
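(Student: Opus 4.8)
The plan is to treat this as a local‑limit computation in Fourier space. Recall from above that $(s,t,x,y)\mapsto P^{(\varepsilon)}_{t-s}(y-x)$ solves the Fokker--Planck equation $(\partial_t-\mathscr{L}^{(\varepsilon)})p=0$ and that, by the generator identities recorded above, $\mathscr{L}^{(\varepsilon)}$ has Fourier symbol $-\varepsilon^{-\alpha}(1-\hat\mu(\varepsilon z))$; hence the Fourier transform of $x\mapsto P^{(\varepsilon)}_t(x)$ over the lattice $\varepsilon\Z$ equals $\exp(-(t/\varepsilon^\alpha)(1-\hat\mu(\varepsilon z)))$. Inverting over the dual torus $[-\pi/\varepsilon,\pi/\varepsilon]$ and comparing with the analogous formula for $p_t$ gives
\[
	\frac{P^{(\varepsilon)}_t(x)}{\varepsilon}-p_t(x)
	=\frac{1}{2\pi}\int_{-\pi/\varepsilon}^{\pi/\varepsilon}\e^{-ixz}
	\left[\exp\!\left(-\tfrac{t}{\varepsilon^\alpha}\bigl(1-\hat\mu(\varepsilon z)\bigr)\right)-\e^{-\nu t|z|^\alpha}\right]\d z
	-\frac{1}{2\pi}\int_{|z|>\pi/\varepsilon}\e^{-ixz}\e^{-\nu t|z|^\alpha}\,\d z .
\]
Since $|\e^{-ixz}|=1$, every bound below is uniform in $x\in\varepsilon\Z$, as required. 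The second ("otherwise") case costs nothing: $P^{(\varepsilon)}_t(x)$ is a probability, so $|P^{(\varepsilon)}_t(x)/\varepsilon|\le\varepsilon^{-1}$, whereas $|p_t(x)|\le p_t(0)=Ct^{-1/\alpha}$ by scaling, and adding these gives $Ct^{-1/\alpha}+\varepsilon^{-1}$. So the content is the regime $t\ge K\varepsilon^\alpha|\ln\varepsilon|^{(a+\alpha)/a}$.

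In that regime I would fix a small constant $\eta\in(0,1]$ (depending only on $\nu$, $a$, and the implied constant in Assumption~\ref{ass}(ii)) and split the first integral at $|z|=\eta/\varepsilon$. On $\eta/\varepsilon<|z|<\pi/\varepsilon$, Assumption~\ref{ass}(i) together with continuity, symmetry (so $\hat\mu$ is real) and $\hat\mu\le1$ forces $1-\hat\mu(\theta)\ge\beta>0$ for all $\eta\le|\theta|\le\pi$; hence that part, the last integral over $|z|>\pi/\varepsilon$, and the tail $\int_{|z|>\eta/\varepsilon}\e^{-\nu t|z|^\alpha}\,\d z$ (using $|z|^\alpha\ge(\eta/\varepsilon)^{\alpha-1}|z|$ there) are all of the form $\varepsilon^{-O(1)}\exp(-c\,t\varepsilon^{-\alpha})$ for a fixed $c>0$. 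On the current regime $t\varepsilon^{-\alpha}\ge K|\ln\varepsilon|^{(a+\alpha)/a}\ge K|\ln\varepsilon|$ for $\varepsilon<\varepsilon_0$, so a large enough $K$ makes these terms $\le\varepsilon^N$ for any prescribed $N$, which is dominated by the target $\varepsilon^a|\ln\varepsilon|^{(a+\alpha)/\alpha}t^{-(a+1)/\alpha}\ge c_T\varepsilon^a$ (recall $t\le T$). This is the only place where the precise form of the threshold enters.

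The genuine estimate is the contribution of $|z|\le\eta/\varepsilon$. There Assumption~\ref{ass}(ii) lets me write $\frac{t}{\varepsilon^\alpha}(1-\hat\mu(\varepsilon z))=\nu t|z|^\alpha+E(z)$, with $E(z)$ real and $|E(z)|\le C\,t\varepsilon^a|z|^{\alpha+a}=C(\varepsilon|z|)^a\cdot t|z|^\alpha\le C\eta^a\,t|z|^\alpha$, and (shrinking $\eta$) also $1-\hat\mu(\varepsilon z)\ge\frac{\nu}{2}|\varepsilon z|^\alpha$. The bracket in the integrand is $\e^{-\nu t|z|^\alpha}(\e^{-E(z)}-1)$; using $|\e^{-E}-1|\le|E|\e^{|E|}$ and choosing $\eta$ so small that $C\eta^a\le\nu/2$ gives
\[
	\left|\exp\!\left(-\tfrac{t}{\varepsilon^\alpha}(1-\hat\mu(\varepsilon z))\right)-\e^{-\nu t|z|^\alpha}\right|
	\le C\,t\varepsilon^a|z|^{\alpha+a}\,\e^{-\frac{\nu}{2}t|z|^\alpha}\qquad(|z|\le\eta/\varepsilon).
\]
Enlarging the range of integration to all of $\R$ and rescaling $w=t^{1/\alpha}z$ turns $\int t\varepsilon^a|z|^{\alpha+a}\e^{-\frac{\nu}{2}t|z|^\alpha}\,\d z$ into $C\varepsilon^a t^{1-(\alpha+a+1)/\alpha}=C\varepsilon^a t^{-(a+1)/\alpha}$ (the remaining $w$-integral being a finite constant). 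Combined with the exponentially small pieces this finishes the first regime; in fact it yields the bound with no $|\ln\varepsilon|$ factor, so the logarithm appearing in the statement is slack that one does not actually need.

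The main obstacle is the bookkeeping in this last step: controlling the exponent error $E(z)$ uniformly for $|z|$ as large as $\eta/\varepsilon$ so that it can be absorbed into a fixed fraction of the stretched‑exponential decay $\e^{-\nu t|z|^\alpha}$. The identity $t\varepsilon^a|z|^{\alpha+a}=(\varepsilon|z|)^a\cdot t|z|^\alpha$ is exactly what makes this work—on $|z|\le\eta/\varepsilon$ the error is at most $\eta^a$ times the good exponent—and it is precisely what forces a fixed small choice of $\eta$. Everything else (the exponentially small boundary terms, the calibration of $K$ against the threshold exponent $(a+\alpha)/a$—any large $K$ works, and any threshold $\varepsilon^\alpha|\ln\varepsilon|^\beta$ with $\beta\ge1$ would do—and the trivial "otherwise" case) is routine.
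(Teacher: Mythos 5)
Your proof is correct, and while it shares the paper's overall architecture (Fourier inversion of $P^{(\varepsilon)}_t$ over the dual torus, the trivial triangle-inequality bound in the ``otherwise'' regime, and a splitting of the inversion integral into a central region matched against $p_t$ plus exponentially negligible outer regions), you execute the central estimate differently. The paper cuts the inner region at a $t$- and $\varepsilon$-dependent level $\lambda\asymp(|\ln\varepsilon|/(\nu t))^{1/\alpha}$, chosen so that the exponent error $t\varepsilon^{-\alpha}|1-\hat\mu(\varepsilon z)-\nu(\varepsilon z)^\alpha|$ is uniformly less than $1$ on $|z|\le\lambda$ (this is where the threshold $K\varepsilon^\alpha|\ln\varepsilon|^{(a+\alpha)/a}$ and the factor $|\ln\varepsilon|^{(a+\alpha)/\alpha}$ in the conclusion originate), and then must separately dispose of an intermediate region $\lambda<|z|\le r_0/\varepsilon$ using the lower bound $1-\hat\mu(w)\ge\nu|w|^\alpha/2$. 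You instead cut at a fixed $\eta/\varepsilon$ and absorb the exponent error via the identity $t\varepsilon^a|z|^{\alpha+a}=(\varepsilon|z|)^a\cdot t|z|^\alpha\le\eta^a\,t|z|^\alpha$ into half of the decay $\e^{-\nu t|z|^\alpha}$, which lets you integrate the error bound over all of $\R$ in one stroke. This eliminates the intermediate region entirely and, as you note, yields the sharper bound $C\varepsilon^a t^{-(a+1)/\alpha}$ with no logarithmic factor; the only role left for the threshold (and for a large $K$) is to make the boundary pieces over $\eta/\varepsilon<|z|\le\pi/\varepsilon$ and $|z|>\pi/\varepsilon$ polynomially small, exactly as you say. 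The one point to make explicit is that $\eta$ must also be taken small enough that the expansion of Assumption \ref{ass}(ii) is valid on $|w|\le\eta$ (the paper sidesteps this by extending the bound \eqref{qqq} to all $w$ using the boundedness of $\hat\mu$), but that is a routine adjustment and does not affect the argument.
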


When $\alpha=2$, the preceding is a sharp form of the
usual local central limit theorem for random walks.
The standard method to obtain such a result in continuous time
is to appeal to the classical local limit theorem for discrete-time random walks,
and then ``Poissonize.'' See Theorem 2.5.6 of Lawler and Limic \cite{LL}.
Proposition \ref{pr:locallimit} is a much stronger  version of the resulting estimate.
As was mentioned earlier, Proposition \ref{pr:locallimit} tells us that
$\varepsilon^{-1}P^{(\varepsilon)}_t(x)\approx p_t(x)$ when $\varepsilon$ is
small. The more significant part of our estimate is that 
it states that this approximation is good uniformly for all times 
$t\ge \varepsilon^{\alpha+o(1)}$; it is not hard
to see from the forthcoming proof that this time bound is basically sharp.
The proof is similar to---though
not identical to---the standard proof of the classical local limit
theorem for discrete-time random walks, but we work directly in
continuous time, rather than ``Poissonize.''

\begin{proof}
	By the Fourier inversion theorem,
	\begin{equation}
		p_t(x) = \frac1\pi\int_0^\infty\cos(zx)\e^{-\nu tz^\alpha}\,\d z \le
		p_t(0) = \frac{C}{t^{1/\alpha}}.
	\end{equation}
	Since $P^{(\varepsilon)}_t(x)=
	\P\{ X_{t/\varepsilon^{\alpha}}=\varepsilon^{-1}x\}\le 1$, it follows readily from
	the nearly-tautological inequality $|\varepsilon^{-1}P^{(\varepsilon)}_t(x)-p_t(x)|\le
	\varepsilon^{-1}P^{(\varepsilon)}_t(x)+p_t(x)$  that
	\begin{equation}
		\sup_{x\in\varepsilon\Z} \left| 
		\frac{P_t^{(\varepsilon)}(x)}{\varepsilon}-  p_t(x) \right|
		\le C\left( \varepsilon^{-1}+t^{-1/\alpha}\right).
	\end{equation}
	The preceding is valid for all $t,\varepsilon$, etc., but is useful to
	us in the case that $t<K\varepsilon^\alpha|\ln\varepsilon|^{(a+\alpha)/a}$. 
	From now on, we will concentrate on the more subtle case that
	$t\ge K\varepsilon^\alpha|\ln\varepsilon|^{(a+\alpha)/a}$, where the value of $K$ 
	will be determined during the course of the proof.
	
	Recall that 
	\begin{equation}
		\E\exp(iz X_{t/\varepsilon^{\alpha}})=
		\exp \left(-t\varepsilon^{-\alpha}[1-\hat{\mu}(z)] \right),
	\end{equation}
	for all $\varepsilon>0$, $t\ge 0$, and $z\in\R$.
	In accord with the inversion formula,
	\begin{equation}\label{FI}
		\frac{2\pi}{\varepsilon}P_t^{(\varepsilon)}(x)
		=\int_{-\pi/\varepsilon}^{\pi/\varepsilon} 
		\exp\left(-ixz -\frac{t[1-\hat{\mu}(\varepsilon z)]}{\varepsilon^{\alpha}}\right) \d z.
	\end{equation}
	Let us now choose and fix $\varepsilon\in(0\,,\nicefrac12)$. Assumption
	\ref{ass} and the uniform boundedness of $\hat\mu$ together
	guarantee that
	\begin{equation}\label{qqq}
		\left| 1 -\hat{\mu}(w) - \nu|w|^\alpha \right| \le C|w|^{a+\alpha}
		\qquad\text{for all $w\in\R$}.
	\end{equation}
	This bound has the following ready consequence: There exists $r_0\in(0\,,\pi)$
	such that
	\begin{equation}\label{r_0}
		1-\hat\mu(w) \ge \frac{\nu |w|^\alpha}{2}\qquad
		\text{whenever}\qquad |w|\le r_0.
	\end{equation}
	[It might help to recall that $\hat\mu$ is real-valued by symmetry.]
	
	Let us define
	\begin{equation}
		\lambda:= \left(\frac{(10+2a)\vert\ln\varepsilon\vert}{\nu t}\right)^{1/\alpha},
	\end{equation}
	for notational simplicity.  If $t>K\varepsilon^\alpha|\ln\varepsilon|^{(a+\alpha)/a}$,
	and $K>20+4a$, then
	\begin{equation}
		\lambda \le \left(\frac{|\ln\varepsilon|}{%
		2\varepsilon^\alpha|\ln\varepsilon|^{(a+\alpha)/a}}\right)^{1/\alpha}
		<\frac{r_0}{\varepsilon},
	\end{equation}
	for all $\varepsilon<\varepsilon_0$, as long as $\varepsilon_0>0$ is sufficiently small.
	In this case, we may split up the Fourier integral of \eqref{FI}
	into three regions as follows:
	\begin{equation}
		\frac{2\pi}{\varepsilon}P_t^{(\varepsilon)}(x) :=\mathbb{I}_1+\mathbb{I}_2+\mathbb{I}_3;
	\end{equation}
	where
	\begin{equation}
		\mathbb{I}_1:=\int_{|z| \le \lambda};\quad 
		\mathbb{I}_2:= \int_{\lambda<|z| \le r_0/\varepsilon};\quad 
		\mathbb{I}_3:=\int_{r_0/\varepsilon<|z| \le \pi/\varepsilon}.
	\end{equation}
	According to the inversion theorem,
	\begin{equation}
		2\pi p_t(x)=\int_{-\infty}^\infty \exp\left(-ixz
		-\nu t|z|^{\alpha}\right) \d z.
	\end{equation}
	Recall that our goal is to show that $\varepsilon^{-1} P^{(\varepsilon)}_t(x)\simeq
	p_t(x)$; with this aim in mind, we use a well-known 
	strategy. Namely, we will demonstrate that: 
	\begin{enumerate}
		\item[(i)] $\mathbb{I}_1\simeq 2\pi p_t(x)$; whereas 
		\item[(ii)] $\mathbb{I}_2\simeq\mathbb{I}_3\simeq 0$.
	\end{enumerate}
	
	Let us begin by estimating $\mathbb{I}_1$, whose analysis turns out
	to require the most care. Clearly,
	\begin{align}\notag
		&\sup_{x\in\varepsilon\Z} \left| \mathbb{I}_1-2\pi p_t(x)\right|\\
		&\le \int_{-\lambda}^\lambda \left|
			\exp\left(-\frac{t[1-\hat{\mu}(\varepsilon z)]}{\varepsilon^\alpha}
			\right)- \e^{-\nu t|z|^\alpha}\right|\d z
			+ 2\int_{\lambda}^{\infty} \e^{-\nu tz^{\alpha}}\,\d z\\\notag
		&= 2\int_0^\lambda \e^{-\nu tz^{\alpha}}\left|
			1-\exp\left(-\frac{t[1-\hat{\mu}(\varepsilon z)-\nu(\varepsilon z)^\alpha]}{\varepsilon^\alpha}
			\right) \right|\d z
			+ \frac{2}{t^{1/\alpha}}
			\int_{\lambda t^{1/\alpha}}^{\infty} \e^{-\nu y^{\alpha}}\,\d y.
	\end{align}
	Since $0<\lambda<r_0\varepsilon^{-1}<\pi\varepsilon^{-1}$, 
	Assumption \ref{ass} implies
	that uniformly for all $z\in(0\,,\lambda)$ and $t\in(0\,,T]$,
	\begin{equation}\begin{split}
		t\left|\frac{1-\hat{\mu}(\varepsilon z)-\nu(\varepsilon z)^\alpha}{\varepsilon^\alpha}
			\right| &\le Ct z^{a+\alpha}\varepsilon^a \\
		&< Ct\lambda^{a+\alpha}\varepsilon^a\\
		&= \frac{C\cdot(10+2a)^{(a+\alpha)/\alpha}}{t^{a/\alpha}}
			\varepsilon^a|\ln\varepsilon|^{(a+\alpha)/\alpha}.
	\end{split}\end{equation}
	Since $t>K\varepsilon^\alpha|\ln\varepsilon|^{(a+\alpha)/a}$
	for some $K>20+4a$, the preceding implies 
	that uniformly for all $z\in(0\,,\lambda)$ and $t\in(0\,,T]$,
	\begin{equation}
		t\left|\frac{1-\hat{\mu}(\varepsilon z)-\nu(\varepsilon z)^\alpha}{\varepsilon^\alpha}
		\right| \le \frac{C\cdot(10+2a)^{(a+\alpha)/\alpha}}{K^{a/\alpha}}
		<1,
	\end{equation}
	provided that we select $K$ sufficiently large. Henceforth, we choose
	and fix $K>20+4a$ large enough to ensure the preceding condition.
	 Since $|1-\exp(-w)|\le C|w|$ for all $w\in\mathbf{C}$ with $|w|<1$,
	it follows from Assumption \ref{ass} that whenever $t>K\varepsilon^\alpha
	|\ln\varepsilon|^{(a+\alpha)/a}$ for our choice of $K$,
	\begin{align}\notag
		&\sup_{x\in\varepsilon\Z} \left| \mathbb{I}_1-2\pi p_t(x)\right|\\
			\notag
		&\hskip.3in\le \frac{C\cdot (10+2a)^{(a+\alpha)/\alpha}}{t^{a/\alpha}}
			\varepsilon^a|\ln\varepsilon|^{(a+\alpha)/\alpha}
			\int_0^\lambda \e^{-\nu tz^{\alpha}}\,\d z
			+ \frac{2}{t^{1/\alpha}}
			\int_{\lambda t^{1/\alpha}}^{\infty} \e^{-\nu y^{\alpha}}\,\d y\\
		&\hskip.3in\le \frac{C\cdot (10+2a)^{(a+\alpha)/\alpha}}{t^{(a+1)/\alpha}}
			\varepsilon^a|\ln\varepsilon|^{(a+\alpha)/\alpha}
			+ \frac{2}{t^{1/\alpha}}
			\int_{\lambda t^{1/\alpha}}^{\infty} \e^{-\nu y^{\alpha}}\,\d y.
	\end{align}
	Because $\alpha>1$, l'H\^opital's rule implies that
	$\int_q^\infty \exp(-\nu y^\alpha)\,\d y \le  C\exp(-\nu q^\alpha/2)$ for all $q>0$. 
	Hence, for every  $t\in(K\varepsilon^\alpha|\ln\varepsilon|^{(a+\alpha)/a}\,,T]$,
	\begin{equation}\label{eq:I1}\begin{split}
		\sup_{x\in\varepsilon\Z} \left| \mathbb{I}_1-2\pi p_t(x)\right|
			&\le \frac{C\varepsilon^a|\ln\varepsilon|^{(a+\alpha)/\alpha}}{t^{(a+1)/\alpha}} + 
			\frac{C\e^{-\nu t\lambda^\alpha/2}}{t^{1/\alpha}}\\
		&=\frac{C\varepsilon^a|\ln\varepsilon|^{(a+\alpha)/\alpha}}{t^{(a+1)/\alpha}} + 
			\frac{C\varepsilon^{5+a}}{t^{1/\alpha}}\\
		&\le \frac{C\varepsilon^a|\ln\varepsilon|^{(a+\alpha)/\alpha}}{t^{(a+1)/\alpha}},
	\end{split}\end{equation}
	thanks to the definition of $\lambda$.
	
	Next we bound $\mathbb{I}_2$, using \eqref{r_0}, as follows:
	\begin{equation}\begin{split} \label{eq:I2}
		\sup_{x\in\varepsilon\Z} \vert \mathbb{I}_2\vert  &\le 
			\int_{\lambda<|z| \le r_0/\varepsilon} \exp\left(
			\frac{-t[1-\hat{\mu}(\varepsilon z)]}{\varepsilon^{\alpha}}\right) \d z\\
		&\le C\int_\lambda^\infty \e^{-t\nu z^\alpha/2}\,\d z\\
		&\le \frac{C\e^{-\nu t\lambda^\alpha/2}}{t^{1/\alpha}}\\
		&=\frac{C\varepsilon^{5+a}}{t^{1/\alpha}}.
	\end{split}\end{equation}
	
	Finally we bound the quantity $\mathbb{I}_3$ as follows: 
	Since $\hat\mu$ is continuous and real valued,
	thanks to symmetry, part (i) of Assumption \ref{ass} ensures that $\theta:=
	\sup_{r_0\le \vert z\vert \le \pi}\hat{\mu}(z)  <1$,
	and hence
	\begin{equation}\begin{split} \label{eq:I3}
		\sup_{x\in \varepsilon \Z} \vert \mathbb{I}_3 \vert 
			&\le\frac{C}{\varepsilon} \exp \left[-\frac{t(1-\theta)}{\varepsilon^{\alpha}}
			\right]\\
		&\le \frac{C}{\varepsilon} \exp \left(-\frac{t}{C\varepsilon^{\alpha}}\right)\\
		&\le \frac{C}{t^{1/\alpha}}\exp \left(-\frac{t}{C\varepsilon^{\alpha}}\right)\\
		&\le \frac{C\varepsilon^5}{t^{1/\alpha}},
	\end{split}\end{equation}
	uniformly for $t>K\varepsilon^\alpha|\ln\varepsilon|^{(a+\alpha)/a}$.
	Therefore, \eqref{eq:I1}, \eqref{eq:I2}, and \eqref{eq:I3} together imply
	the proposition.
\end{proof}

Proposition \ref{pr:locallimit} has the following consequence for us.

\begin{corollary}\label{co:locallimit}
	Choose and fix $T>0$. Then, 
	\begin{equation}
		\int_{\varepsilon^{\alpha}}^T\d t\int_{-\infty}^\infty\d y\ \left| 
		\frac{P_t^{(\varepsilon)}(\varepsilon[y/\varepsilon])}{\varepsilon}-  p_t(\varepsilon
		[y/\varepsilon]) \right|^2\le \varepsilon^{\alpha-1+o(1)}
		\quad\text{as $\varepsilon\downarrow 0$}.
	\end{equation}
	
\end{corollary}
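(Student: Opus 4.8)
The plan is to feed the pointwise local central limit theorem of Proposition~\ref{pr:locallimit} into a two-scale decomposition of the $t$-integral, the delicate point being the passage from an $L^\infty$-in-space bound to an $L^2$-in-space bound. Write $\tau_\varepsilon:=K\varepsilon^\alpha|\ln\varepsilon|^{(a+\alpha)/a}$ for the threshold appearing in that proposition and split
\[
	\int_{\varepsilon^\alpha}^T\d t\int_{-\infty}^\infty\d y\,\Big|\tfrac1\varepsilon P_t^{(\varepsilon)}(\varepsilon[y/\varepsilon])-p_t(\varepsilon[y/\varepsilon])\Big|^2=\Big(\int_{\varepsilon^\alpha}^{\tau_\varepsilon}+\int_{\tau_\varepsilon}^T\Big)(\cdots)=:\mathrm A_\varepsilon+\mathrm B_\varepsilon .
\]

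For $\mathrm A_\varepsilon$ --- the ``not yet mixed'' regime, where Proposition~\ref{pr:locallimit} only offers the useless bound $t^{-1/\alpha}+\varepsilon^{-1}$ --- I would not use the proposition at all and instead estimate the two terms of $|\cdots|^2$ separately in $L^2(\R)$. Since $y\mapsto g(\varepsilon[y/\varepsilon])$ is a step function of mesh $\varepsilon$, one has $\int_\R(\varepsilon^{-1}P_t^{(\varepsilon)}(\varepsilon[y/\varepsilon]))^2\,\d y=\varepsilon^{-1}\sum_{x\in\varepsilon\Z}P_t^{(\varepsilon)}(x)^2=\varepsilon^{-1}P_{2t}^{(\varepsilon)}(0)\le\varepsilon^{-1}$, the middle identity being the ``collision $=$ return'' formula applied to the difference of two independent copies of the L\'evy process $X^{(\varepsilon)}$; and, by unimodality of the stable density and \eqref{eq:1:rmost:1}, $\int_\R p_t(\varepsilon[y/\varepsilon])^2\,\d y\le\int_\R p_t(y)^2\,\d y+\varepsilon p_t(0)^2=p_{2t}(0)+\varepsilon p_t(0)^2\le Ct^{-1/\alpha}\le C\varepsilon^{-1}$ for $t\ge\varepsilon^\alpha$. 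Hence the inner integral is $\le C\varepsilon^{-1}$ on all of $[\varepsilon^\alpha,\tau_\varepsilon]$, so that $\mathrm A_\varepsilon\le C\varepsilon^{-1}\tau_\varepsilon=CK\,\varepsilon^{\alpha-1}|\ln\varepsilon|^{(a+\alpha)/a}=\varepsilon^{\alpha-1+o(1)}$, as wanted.

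For $\mathrm B_\varepsilon$ I would invoke the sharp half of Proposition~\ref{pr:locallimit}: for $t\ge\tau_\varepsilon$, $\Delta_t:=\sup_{x\in\varepsilon\Z}|\varepsilon^{-1}P_t^{(\varepsilon)}(x)-p_t(x)|\le C\varepsilon^a|\ln\varepsilon|^{(a+\alpha)/\alpha}t^{-(a+1)/\alpha}$. The point to watch is that $\Delta_t$ carries no spatial decay, so $\int_\R(\sup)^2\,\d y$ is infinite; one must interpolate it against an $L^1$-type bound. Writing $\int_\R|\cdots(\varepsilon[y/\varepsilon])|^2\,\d y=\varepsilon\sum_{x\in\varepsilon\Z}|\varepsilon^{-1}P_t^{(\varepsilon)}(x)-p_t(x)|^2\le\Delta_t\sum_{x\in\varepsilon\Z}|P_t^{(\varepsilon)}(x)-\varepsilon p_t(x)|$, and bounding the sum crudely by $\sum_x P_t^{(\varepsilon)}(x)+\varepsilon\sum_x p_t(x)\le 1+(\int_\R p_t(y)\,\d y+\varepsilon p_t(0))\le C$, one gets $\int_\R|\cdots|^2\,\d y\le C\Delta_t$, whence $\mathrm B_\varepsilon\le C\varepsilon^a|\ln\varepsilon|^{(a+\alpha)/\alpha}\int_{\tau_\varepsilon}^T t^{-(a+1)/\alpha}\,\d t$. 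When $(a+1)/\alpha\ge 1$ the $t$-integral is dominated by its lower endpoint, $\int_{\tau_\varepsilon}^T t^{-(a+1)/\alpha}\,\d t\le C\tau_\varepsilon^{(\alpha-a-1)/\alpha}$ (a single logarithm in the borderline case $a=\alpha-1$), and inserting the value of $\tau_\varepsilon$ collapses $\mathrm B_\varepsilon$ to $\varepsilon^{\alpha-1+o(1)}$, finishing the proof in that parameter range.

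The remaining case $0<a<\alpha-1$ is where the real work lies, since then $\sum_x|P_t^{(\varepsilon)}(x)-\varepsilon p_t(x)|\le C$ is far too lossy --- the $t$-integral above saturates at a constant and yields only $\mathrm B_\varepsilon\le\varepsilon^{a+o(1)}$. Here I would keep $\Delta_t$ only on a bounded window and exploit the genuine power-law decay of the two kernels outside it, splitting the $y$-integral at a ($t$- and $\varepsilon$-dependent) threshold $\Lambda$: on $\{|y|\le\Lambda\}$ bound by $2\Lambda\Delta_t^2$, and on $\{|y|>\Lambda\}$ use the standard estimate $p_t(x)\le Ct\,(t^{(\alpha+1)/\alpha}+|x|^{\alpha+1})^{-1}$ together with
\[
	\int_{|y|>\Lambda}\Big(\tfrac1\varepsilon P_t^{(\varepsilon)}(\varepsilon[y/\varepsilon])\Big)^2\,\d y\le\frac1\varepsilon\,\Big(\sup_{|x|>\Lambda}P_t^{(\varepsilon)}(x)\Big)\,\P\{|X_t^{(\varepsilon)}|>\Lambda\},
\]
where $\sup_{|x|>\Lambda}P_t^{(\varepsilon)}(x)\le\varepsilon(p_t(\Lambda)+\Delta_t)$ and $\P\{|X_t^{(\varepsilon)}|>\Lambda\}\le Ct^{\beta/\alpha}\Lambda^{-\beta}$ for any fixed $\beta\in(1,\alpha)$. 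This walk-tail bound is the crux: Assumption~\ref{ass}(ii) forces $1-\hat\mu(z)=O(|z|^\alpha)$ as $z\to0$, whence $\P\{|X_\gamma|>r\}=O(r^{-\alpha})$ by a standard Fourier comparison and, through $\E|X_t^{(\varepsilon)}|^\beta=c_\beta\varepsilon^\beta\int_\R|z|^{-1-\beta}(1-\e^{-(t/\varepsilon^\alpha)(1-\hat\mu(z))})\,\d z\le Ct^{\beta/\alpha}$ for $\beta<\alpha$ and $t\ge\varepsilon^\alpha$, the moment bound above with no $\varepsilon$-dependence. Choosing $\Lambda$ to balance $\Lambda\Delta_t^2$ against the resulting tail term --- one checks the optimal $\Lambda$ is $\gtrsim t^{1/\alpha}$, so the displayed bound on $p_t$ applies --- and integrating in $t$ should again return $\varepsilon^{\alpha-1+o(1)}$. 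Making this optimization uniform over $t\in[\tau_\varepsilon,T]$ is the single step where I expect the argument to demand real care.
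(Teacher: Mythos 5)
Your two--regime split at $\tau_\varepsilon=K\varepsilon^\alpha|\ln\varepsilon|^{(a+\alpha)/a}$ and, on $[\tau_\varepsilon,T]$, the interpolation $\int|f|^2\le\|f\|_{L^\infty}\|f\|_{L^1}$ with $\|f\|_{L^1}$ controlled by the total masses of the two kernels is exactly the paper's argument: the paper writes it as $\int|\cdots|^2\,\d y\le\mathscr{M}_t(\varepsilon)\int[\varepsilon^{-1}P^{(\varepsilon)}_t+p_t]\,\d y$, using the same unimodality-of-$p_t$ step you invoke. On $[\varepsilon^\alpha,\tau_\varepsilon]$ the paper instead multiplies the crude branch $t^{-1/\alpha}+\varepsilon^{-1}$ of Proposition \ref{pr:locallimit} by the $L^1$ mass, whereas you compute the two $L^2$ norms directly via the collision identity $\sum_xP_t^{(\varepsilon)}(x)^2=P_{2t}^{(\varepsilon)}(0)\le1$ (the same device the paper uses later in Lemma \ref{lem:pbyeps}); both routes give $C/\varepsilon$ per unit time and hence $\varepsilon^{\alpha-1+o(1)}$ after integrating over an interval of length $\varepsilon^{\alpha+o(1)}$. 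Up to this point, and for the regime $a\ge\alpha-1$, your proof is correct and is essentially the paper's.

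The issue you raise for $a<\alpha-1$ is genuine: the interpolation on $[\tau_\varepsilon,T]$ yields $C\varepsilon^a|\ln\varepsilon|^{(a+\alpha)/\alpha}\int_{\tau_\varepsilon}^Tt^{-(a+1)/\alpha}\,\d t$, and when $a+1<\alpha$ the time integral is $O(1)$, so one obtains only $\varepsilon^{a+o(1)}$. The paper's own displayed computation has precisely the same limitation and nevertheless asserts $\varepsilon^{\alpha-1+o(1)}$ in the last line (note that Example \ref{ex:alpha<2} has $a=2-\alpha<\alpha-1$ whenever $\alpha>3/2$, so the case is not vacuous). You have therefore diagnosed a real soft spot rather than introduced one. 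However, your proposed repair is not a proof: the cutoff $\Lambda$ is never chosen, and the far-field term
\begin{equation*}
	\int_{|y|>\Lambda}p_t(y)^2\,\d y\le C\,t^2\Lambda^{-2\alpha-1},
\end{equation*}
which carries no factor of $\Delta_t$ whatsoever, forces $\Lambda$ to be large, which in turn inflates the near-field contribution $\Lambda\Delta_t^2$; a rough attempt to balance these (even taking $\beta\uparrow\alpha$ in your tail estimate) does not obviously land at or below $\varepsilon^{\alpha-1}$ uniformly in $t\in[\tau_\varepsilon,T]$, and you concede as much. So the status is: for $a\ge\alpha-1$ your argument is complete and coincides with the paper's; for $a<\alpha-1$ you have correctly identified the gap---one the paper shares---but not closed it, and the corollary (hence its use in Assertion (3)) remains unproved in that parameter range by both your argument and the paper's as written.
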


\begin{proof}
	First of all, let us note that for all $\varepsilon\in(0\,,\nicefrac12)$
	and $t\in(0\,,T]$,
	\begin{equation}\label{eq:I1a}
		\int_{-\infty}^\infty \frac{P^{(\varepsilon)}_t(\varepsilon[y/\varepsilon])}{\varepsilon}
		\,\d y=\sum_{j=-\infty}^\infty P^{(\varepsilon)}_t(j\varepsilon)
		=1.
	\end{equation}
	The preceding has an analogue for the stable density $p_t$ as well. 
	In order to prove that, recall that we can realize our symmetric
	stable process as $\{B_{T(t)}\}_{t\ge 0}$, where $B$ denotes Brownian motion,
	and $T$ an independent stable-$2\alpha$ subordinator;
	see Bochner\cite[\S4.4]{Bochner}. 
	In particular,  it follows that we can write the stable heat kernel $p_t(x)$, via subordination,
	as follows:
	\begin{equation}
		p_t(x) = \int_0^\infty q_s(x) \tau_t(s)\,\d s,
		\quad\text{where}\quad
		q_s(x) := (2\pi s)^{-1/2}\exp\left( - \frac{x^2}{2s}\right)
	\end{equation}
	denotes the Brownian motion transition probability,
	and $\tau_t(\,\cdot)$ is a probability density on $(0\,,\infty)$
	for every $t>0$. Since $x\mapsto q_s(x)$ is nonincreasing on $(0\,,\infty)$, it follows that
	$p_t$ is, as well. We now use this conclusion as follows: 
	\begin{equation}\label{eq:I2a}\begin{split}
		\int_{-\infty}^\infty p_t(\varepsilon[y/\varepsilon])
			\,\d y &= 2\varepsilon\sum_{j=0}^\infty p_t(j\varepsilon)\\
		&=2\varepsilon p_t(0) + 2\sum_{j=1}^\infty \int_{(j-1)\varepsilon}^{j\varepsilon}
			p_t(j\varepsilon)\,\d y\\
		&\le 2\varepsilon p_t(0) + 2\int_0^\infty p_t(y)\,\d y\\
		&=\frac{C\varepsilon}{t^{1/\alpha}} + 1.
	\end{split}\end{equation}
	We may combine \eqref{eq:I1a} and \eqref{eq:I2a} as follows:
	\begin{equation}\begin{split}
		&\int_{-\infty}^\infty \left| \frac{P^{(\varepsilon)}_t
			(\varepsilon[y/\varepsilon])}{\varepsilon}
			-p_t(\varepsilon[y/\varepsilon])\right|^2\,\d y\\
		&\hskip0.25in\le \mathscr{M}_t(\varepsilon) 
			\cdot \int_{-\infty}^\infty \left[ 
			\frac{P^{(\varepsilon)}_t(\varepsilon[y/\varepsilon])}{\varepsilon}
			+ p_t(\varepsilon[y/\varepsilon])\right]\d y\\
		&\hskip0.25in \le \mathscr{M}_t(\varepsilon) 
			\cdot\left( \frac{C\varepsilon}{t^{1/\alpha}}
			+2\right),
	\end{split}\end{equation}
	where
	\begin{equation}
		\mathscr{M}_t(\varepsilon) := \sup_{y\in\varepsilon\Z}\left|
		\frac{P^{(\varepsilon)}(y)}{\varepsilon} -
		p_t(y)\right|. 
	\end{equation}
	In accord with Proposition \ref{pr:locallimit}, 
	\begin{align}\notag
		&\int_{\varepsilon^{\alpha}}^{K\varepsilon^\alpha|\ln\varepsilon|^{(a+\alpha)/a}}\d t
			\int_{-\infty}^\infty\d y\ \left| 
			\frac{P^{(\varepsilon)}_t(\varepsilon[y/\varepsilon])}{\varepsilon}
			-p_t(\varepsilon[y/\varepsilon])\right|^2\\\notag
		&\le C\int_{\varepsilon^{\alpha}}^{K\varepsilon^\alpha
			|\ln\varepsilon|^{(a+\alpha)/a}}
			\left(\frac{1}{t^{1/\alpha}}+\frac{1}{\varepsilon}\right)
			\left( \frac{\varepsilon}{t^{1/\alpha}} +2\right)\d t\\\notag
		&\le C\varepsilon\int_{\varepsilon^{\alpha}}^{%
			K\varepsilon^\alpha|\ln\varepsilon|^{(a+\alpha)/a}}
			\frac{\d t}{t^{2/\alpha}}+ C\int_{\varepsilon^{\alpha}}^{K\varepsilon^\alpha|
			\ln\varepsilon|^{(a+\alpha)/a}}
			\frac{\d t}{t^{1/\alpha}} 
			 + C\varepsilon^{\alpha-1}|\ln\varepsilon|^{(a+\alpha)/a}\\\notag
		&\le C \varepsilon^{\alpha-1}|\ln\varepsilon|^{(a+\alpha)/a}\\\notag
		&= \varepsilon^{\alpha-1+o(1)}.
	\end{align}
	Similarly,
	\begin{align}\notag
		&\int_{K\varepsilon^\alpha|\ln\varepsilon|^{(a+\alpha)/a}}^T\d t
			\int_{-\infty}^\infty\d y\ \left| \frac{P^{(\varepsilon)}_t(\varepsilon[y/\varepsilon])}{\varepsilon}
			-p_t(\varepsilon[y/\varepsilon])\right|^2\\\notag
		&\le C\int_{K\varepsilon^\alpha|\ln\varepsilon|^{(a+\alpha)/a}}^T
			\frac{\varepsilon^a
			|\ln \varepsilon|^{(a+\alpha)/\alpha}}{t^{(a+1)/\alpha}}
			\left( \frac{\varepsilon}{t^{1/\alpha}} +2\right)\d t\\
		&\le C\varepsilon^{a+1}|\ln \varepsilon|^{(a+\alpha)/\alpha}
			\int_{K\varepsilon^\alpha|\ln\varepsilon|^{(a+\alpha)/a}}^T
			\frac{\d t}{t^{(a+2)/\alpha}} \\\notag
		&\hskip1.5in +  C\varepsilon^a|\ln \varepsilon|^{(a+\alpha)/\alpha}
			\int_{K\varepsilon^\alpha|\ln\varepsilon|^{(a+\alpha)/a}}^T
			\frac{\d t}{t^{(a+1)/\alpha}}\\\notag
		&\le  \varepsilon^{\alpha-1+o(1)},
	\end{align}
	as $\varepsilon\downarrow0$, because $\alpha\in(1\,,2]$. The corollary follows
	from these computations.
\end{proof}

We are now ready to close this subsection by verifying Assertion (3).
The proof of Corollary \ref{co:locallimit} can be readily adjusted in order
to establish the following variation of Corollary \ref{co:locallimit}:
Uniformly for all $x\in\varepsilon\Z$ and $t\in(\varepsilon^{\alpha}\,,T]$,
\begin{equation}\begin{split}
	&\int_{\varepsilon^{\alpha}}^t\d s\int_{-\infty}^\infty\d y\ \left| 
		\frac{P_s^{(\varepsilon)} \left( \varepsilon[y/\varepsilon]-
		\varepsilon[x/\varepsilon]\right)}{\varepsilon}-  p_s\left(\varepsilon
		[y/\varepsilon]-\varepsilon[x/\varepsilon]\right) \right|^2\\
	&\hskip3.3in\le \varepsilon^{\alpha-1+o(1)},
\end{split}\end{equation}
as $\varepsilon\downarrow0$. We omit the minor remaining details. Since
$\E(|\sigma(u_t(z))|^k)$ is bounded uniformly in $z\in\R$
and $t\in(0\,,T]$ by a finite constant $K_{T,k}$, an application of the
Burkholder--Davis--Gundy inequality (BDG)  yields
\begin{align}\notag
	&\left\| v^{(\varepsilon)}_t(x) - V^{(\varepsilon)}_t(x)\right\|_k^2\\\notag
	&\le CK_{T,k}\int_{\varepsilon^{\alpha}}^t\d s\int_{-\infty}^\infty\d y\ \left| 
		\frac{P_s^{(\varepsilon)} \left( \varepsilon[y/\varepsilon]-
		\varepsilon[x/\varepsilon]\right)}{\varepsilon}-  p_s\left(\varepsilon
		[y/\varepsilon]-\varepsilon[x/\varepsilon]\right) \right|^2\\
	&\le \varepsilon^{\alpha-1+o(1)}.
\end{align}
Because $\alpha-1>\rho$, this verifies Assertion (3).\qed

\subsection{Verification of Assertion (4)} \label{sec:(4)}
Since $\E(|\sigma(u_s(y))|^k)$ is bounded uniformly in $y\in\R$ and
$s\in(0\,,T]$ for every integer $k\ge 2$, the Burkholder--Davis--Gundy 
inequality (BDG)  implies that, uniformly for all $\varepsilon\in(0\,,\nicefrac12)$,
$x\in\varepsilon\Z$, and $t\in(0\,,T]$,
\begin{equation}\begin{split}
	\left\| V_t^{(\varepsilon)}(x) -W_t^{(\varepsilon)}(x)\right\|_k^2
		&\le \frac{C}{\varepsilon^2}\int_{0}^{\varepsilon^{\alpha}} \d s 
		\int_{-\infty}^\infty\d y\ \left|
		P_s^{(\varepsilon)}\left(\varepsilon[y/\varepsilon] \right)\right|^2\\
	 & = \frac{C}{\varepsilon}\int_{0}^{\varepsilon^{\alpha}} 
	 	\sum_{j=-\infty}^\infty \left| P_s^{(\varepsilon)}(j\varepsilon)
		\right|^2 \,\d s.
\end{split}\end{equation}
Thanks to the Cauchy--Schwarz inequality,
\begin{equation}\label{eq:CS:P}
	\sum_{j=-\infty}^\infty \left| P_s^{(\varepsilon)}(j\varepsilon)\right|^2
	\le  \left| \sum_{j=-\infty}^\infty P_s^{(\varepsilon)}(j\varepsilon)\right|^2=1.
\end{equation}
Because $\alpha-1>\rho$, this proves Assertion (4). \qed

\subsection{Verification of Assertion (5)}
We begin with a technical lemma.
\begin{lemma} \label{lem:pbyeps}The following holds
\begin{equation}
	\sup_{0<\varepsilon<1}\,\sup_{\substack{0\le t\le T}}
	\int_0^t \d s \int_{-\infty}^\infty\d y\left[ \frac{P_{t-s}^{(\varepsilon)}\left(
	\varepsilon \left[y/\varepsilon\right] -x \right) }{\varepsilon}\right]^2 <\infty.
\end{equation}
\end{lemma}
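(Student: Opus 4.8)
The plan is to collapse the double integral to a one-dimensional integral of a return probability and then estimate that integral by Fourier inversion on the torus; the essential point is that $\alpha>1$ is exactly what makes the resulting time integral converge, and a matching of powers of $\varepsilon$ is what makes the bound uniform.

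First, since $y\mapsto\varepsilon[y/\varepsilon]$ assumes each value $j\varepsilon$ ($j\in\Z$) on a set of Lebesgue measure $\varepsilon$, one has for fixed $0\le s<t$ that
\[
	\int_{-\infty}^\infty\left[\frac{P_{t-s}^{(\varepsilon)}(\varepsilon[y/\varepsilon]-x)}{\varepsilon}\right]^2\d y
	=\frac1\varepsilon\sum_{j\in\Z}\bigl[P_{t-s}^{(\varepsilon)}(j\varepsilon-x)\bigr]^2 .
\]
If $x\notin\varepsilon\Z$ the right-hand side is $0$ because $X^{(\varepsilon)}$ takes values in $\varepsilon\Z$; otherwise translation invariance reduces matters to $x=0$, and then symmetry of the walk together with the Chapman--Kolmogorov identity give $\sum_{j}[P_{t-s}^{(\varepsilon)}(j\varepsilon)]^2=P_{2(t-s)}^{(\varepsilon)}(0)$. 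Substituting $v=2(t-s)$, the quantity in the lemma becomes $\frac{1}{2\varepsilon}\int_0^{2t}P_v^{(\varepsilon)}(0)\,\d v$, and it remains to bound this uniformly for $\varepsilon\in(0,1)$ and $t\in[0,T]$.

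Next I would use that $P_v^{(\varepsilon)}(0)=\P\{X_{v/\varepsilon^\alpha}=0\}$, so that Fourier inversion on $[-\pi,\pi]$ and \eqref{eq:muhat} give $P_v^{(\varepsilon)}(0)=(2\pi)^{-1}\int_{-\pi}^\pi\exp(-(v/\varepsilon^\alpha)[1-\hat\mu(z)])\,\d z$. Performing the $\d v$-integral first (Tonelli), and discarding the single null point $z=0$ at which $1-\hat\mu(z)=0$ by Assumption \ref{ass}(i), I obtain
\[
	\frac1{2\varepsilon}\int_0^{2t}P_v^{(\varepsilon)}(0)\,\d v
	=\frac{\varepsilon^{\alpha-1}}{4\pi}\int_{-\pi}^\pi
	\frac{1-\exp(-(2t/\varepsilon^\alpha)[1-\hat\mu(z)])}{1-\hat\mu(z)}\,\d z .
\]

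Finally I would split $[-\pi,\pi]$ into $\{|z|\le r_0\}$ and $\{r_0<|z|\le\pi\}$, with $r_0$ as in \eqref{r_0}. On the outer piece $1-\hat\mu(z)\ge1-\theta$, where $\theta:=\sup_{r_0\le|z|\le\pi}\hat\mu(z)<1$ as established in the proof of Proposition \ref{pr:locallimit}, so this piece contributes at most $\varepsilon^{\alpha-1}/(2(1-\theta))\le 1/(2(1-\theta))$, using $\varepsilon<1$ and $\alpha>1$. On the inner piece I would bound the numerator via $1-\e^{-a}\le\min(1,a)$ and use \eqref{r_0} to estimate the integrand pointwise by $\min(2t\varepsilon^{-\alpha},\,2\nu^{-1}|z|^{-\alpha})$; after the rescaling $z=\varepsilon w$ the prefactor $\varepsilon^{\alpha-1}$ is absorbed exactly, and the inner contribution is at most $(4\pi)^{-1}\int_{\R}\min(2t,\,2\nu^{-1}|w|^{-\alpha})\,\d w$. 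This last integral is finite precisely because $\alpha>1$ makes $|w|^{-\alpha}$ integrable near infinity, and a direct computation bounds it by $CT^{1-1/\alpha}$ with $C=C(\nu,\alpha)$, uniformly for $t\in[0,T]$. Adding the two pieces would give a finite bound depending only on $(T,\nu,\alpha)$ and the walk, which establishes the lemma. The crux—and the only place the hypothesis enters—is the cancellation between the prefactor $\varepsilon^{\alpha-1}$ and the $\varepsilon^{1-\alpha}$ produced by rescaling the $|z|^{-\alpha}$-singularity of $1/(1-\hat\mu)$ at the origin; this is what simultaneously yields uniformity in $\varepsilon$ and forces $\alpha>1$.
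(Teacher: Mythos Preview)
Your proof is correct and follows essentially the same route as the paper's: reduce the double integral to a one-dimensional time integral of the return probability, apply Fourier inversion on $[-\pi,\pi]$ to obtain the same representation $\frac{\varepsilon^{\alpha-1}}{4\pi}\int_{-\pi}^\pi(1-\hat\mu)^{-1}\bigl(1-\exp(-2t\varepsilon^{-\alpha}[1-\hat\mu])\bigr)\,\d z$, and then split according to the size of $|z|$. The only cosmetic differences are that the paper reaches $\sum_j[P_s^{(\varepsilon)}(j\varepsilon)]^2$ via an independent-copy argument rather than Chapman--Kolmogorov, and it splits at the $\varepsilon$-dependent threshold $|\xi|=\varepsilon$ (avoiding your rescaling step) rather than at the fixed $r_0$; both variants are equivalent.
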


\begin{proof}
	We first simplify the expression 
	\begin{align}\notag
		\int_0^t\d s \int_{-\infty}^\infty\d y
			\left[ \frac{ P_{t-s}^{(\varepsilon)}\left(\varepsilon 
			\left[y/\varepsilon\right] -x \right)}{\varepsilon} \right]^2 
			&=\frac{1}{\varepsilon}\int_0^t \d s \sum_{j=-\infty}^{\infty} \left[ 
			P_s^{(\varepsilon)}(j\varepsilon)\right]^2 \\\notag
		&=\varepsilon^{\alpha-1}\int_0^{t/\varepsilon^{\alpha}} \d r 
			\sum_{j=-\infty}^\infty |\P\{X_r=j\}|^2 \\
		&= \varepsilon^{\alpha-1}\int_0^{t/\varepsilon^{\alpha}} \d r \,\P\{Z_r=0\},
	\end{align}
	where $Z_r:=X_r-\tilde{X}_r$  for an independent copy $\tilde{X}$ of $X$.
	Of course, $Z$ is a random walk with the same jump distribution as $X$ 
	but twice the jump rate. Thus,
	\begin{equation}\begin{split}
		&\int_0^t \d s \int_{-\infty}^\infty\d y\left[ 
			\frac{P_{t-s}^{(\varepsilon)}\left(\varepsilon
			\left[y/\varepsilon\right] -x \right) }{\varepsilon}\right]^2\\
		&= \frac{\varepsilon^{\alpha-1}}{2}\int_0^{2t/\varepsilon^{\alpha}} 
			\P\{ X_r=0\}\,\d r\\
		&=\frac{\varepsilon^{\alpha-1}}{4\pi} \int_{-\pi}^{\pi} 
			\left\lbrace1-\exp\left(\frac{-2t[1-\hat{\mu}(\xi)]}{%
			\varepsilon^{\alpha}}\right)\right\rbrace
			\frac{\d\xi}{1-\hat{\mu}(\xi)},\label{finalINT}
	\end{split}\end{equation}
	where we used the Fourier inversion formula for the last line. 
	Due to condition (i) of Assumption \ref{ass}, we can find constants 
	$0<C_1<C_2$ such that $C_1\vert \xi\vert^{\alpha} <
	1-\hat{\mu}(\xi)<C_2\vert \xi \vert^{\alpha}$ for $\vert \xi\vert \le \pi$. 
	With this observation in mind,
	we may split the final integral of \eqref{finalINT}
	into two integrals: One is computed over the region  $\{\vert \xi\vert \le \varepsilon\}$,
	the other over $\{\vert \xi\vert > \varepsilon\}$. 
	For the first integral, we use the bound $1-\e^{-x}\le x$ 
	valid for $x\ge0$ and for the second integral, we use the bound $1-\e^{-x}\le 1$.
	In this way we find that
	\begin{equation}\begin{split}
		&\int_0^t \d s \int_{-\infty}^\infty\d y\left[ 
			\frac{P_{t-s}^{(\varepsilon)}\left(\varepsilon 
			\left[y/\varepsilon\right] -x \right) }{\varepsilon}\right]^2\\
		&\hskip1in\le \frac{\varepsilon^{\alpha-1}}{4\pi}\int_{\vert \xi\vert \le \varepsilon}
			\frac{2t}{\varepsilon^{\alpha}} \,\d\xi+  
			\frac{\varepsilon^{\alpha-1}}{4\pi}
			\int_{\varepsilon<\vert \xi\vert \le \pi}\frac{1}{C_1\vert\xi\vert^{\alpha}}\d \xi.
	\end{split}\end{equation}
	This is a uniformly bounded quantity.
\end{proof}

We begin our derivation of Assertion (5) with the observation that for $x\in \varepsilon \Z$,
\begin{align}
	&\bar{U}_t^{(\varepsilon)}(x)- W_t^{(\varepsilon)}(x) \\\notag
	&=\frac{1}{\varepsilon} \int_{(0,t)\times \R}P_{t-s}^{(\varepsilon)}
		\left(\varepsilon \left[y/\varepsilon\right] -x \right) \left[ 
		\sigma\left(\bar{U}_s\left(\varepsilon \left[y/\varepsilon\right]\right)\right)
		-\sigma\left(u_s\left(\varepsilon \left[y/\varepsilon\right]\right)\right)
		\right] W(\d s\,\d y).
\end{align}
Consequently, we may apply the Burkholder--Davis--Gundy inequality
(BDG), together with the triangle inequality, in order to find that
the nonrandom function
\begin{equation}
	\mathscr{D}(t) := \sup_{x\in\varepsilon\Z}
	\left\|\bar{U}_t^{(\varepsilon)}(x)- W_t^{(\varepsilon)}(x)\right\|_k^2 
	\qquad(t\ge 0)
\end{equation}
satisfies the recursive inequality
\begin{equation}
	\mathscr{D}(t) 
	\le  \frac{C}{\varepsilon^2} \int_0^t
	\left( \mathscr{D}(s) + \mathscr{E}(s)\right)
	\d s\int_{-\infty}^\infty\d y\
	\left[P_{t-s}^{(\varepsilon)} \left(\varepsilon \left[y/\varepsilon\right] -x \right)\right]^2,
\end{equation}
where
\begin{equation}
	\mathscr{E}(s) := \sup_{y\in\varepsilon\Z}
	\left\| u_s\left(\varepsilon\left[y/\varepsilon\right]\right)- 
	W_s^{(\varepsilon)}\left(\varepsilon\left[y/\varepsilon\right]\right)\right\|_k^2
	\qquad(s\ge 0).
\end{equation}
The already-proven Assertions (1)--(4) together show that
$\sup_{s\in(0,T]}\mathscr{E}(s)\le C\varepsilon^{\alpha-1}$
for all sufficiently small $\varepsilon$.
Consequently, we arrive at the following recursive inequality, valid
uniformly for every $t\in(0\,,T]$ and $\varepsilon>0$ sufficiently
small:
\begin{align}
	\mathscr{D}(t) 
		&\le  C\varepsilon^{\alpha-1}\int_0^t\d s\int_{-\infty}^\infty\d y\
		\left| \frac{P_{t-s}^{(\varepsilon)}\left(\varepsilon \left[y/\varepsilon\right] -x \right)}{
		\varepsilon} \right|^2 \\\notag
	&\hskip1.5in + C \int_0^t\d s\int_{-\infty}^\infty\d y\
		\left| \frac{P_{t-s}^{(\varepsilon)}\left(\varepsilon \left[y/\varepsilon\right] -x \right)}{
		\varepsilon} \right|^2 \mathscr{D}(s)\\\notag
	&\le C\varepsilon^{\alpha-1}+ C \int_0^t\d s\,\mathscr{D}(s)\int_{-\infty}^\infty\d y\
		\left| \frac{P_{t-s}^{(\varepsilon)}\left(\varepsilon \left[y/\varepsilon\right] -x \right)}{
		\varepsilon} \right|^2 .
\end{align}
Gronwall's inequality reveals that $\mathscr{D}(t) \le C \varepsilon^{\alpha-1} $. 
In order to use that inequality, we  need to know that
$\sup_{0\le t\le T} \mathscr{D}(t)<\infty$; this follows from 
the known bounds
$\sup_{0\le t\le T} \E(\vert u_t(x)\vert^k)<\infty$ and
$\sup_{0\le t\le T} \E(\vert \bar{U}^{(\varepsilon)}_t(x)\vert^k)<\infty$;
see \cite{GJKMS}. 
\qed

\section{Proof of the remaining results}
For the proof of Theorem \ref{th:SA}, we will need to estimate the 
H\"older continuity of the random function
$t\mapsto \bar{U}_t^{(\varepsilon)}$. The following address this H\"older
continuity matter.

\begin{lemma} \label{lem:ubar:hold} 
	For every real $T>0$ and integer $k\ge 2$,
	\begin{equation}
		\sup_{x\in\varepsilon\Z}
		\E\left(\left|\bar{U}_t^{(\varepsilon)}(x)-
		\bar{U}_s^{(\varepsilon)}(x)\right|^k\right) \le C 
		\left(\frac{t-s}{\varepsilon}\right)^{k/2},
	\end{equation}
	uniformly for all $0<s<t\le T$.
\end{lemma}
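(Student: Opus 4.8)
The plan is to run a direct moment estimate on the mild form of $\bar U^{(\varepsilon)}$. Rewriting \eqref{eq:ubar} in white-noise form (legitimate because the family $\{B^{(j,\varepsilon)}\}$ reconstitutes $W$ over the $\varepsilon$-blocks), one has, for $x\in\varepsilon\Z$,
\begin{equation*}
	\bar U_t^{(\varepsilon)}(x)=1+\frac1\varepsilon\int_{(0,t)\times\R}
	P_{t-r}^{(\varepsilon)}\bigl(\varepsilon[y/\varepsilon]-x\bigr)\,
	\sigma\bigl(\bar U_r^{(\varepsilon)}(\varepsilon[y/\varepsilon])\bigr)\,W(\d r\,\d y).
\end{equation*}
Subtracting the representations at times $t$ and $s$ splits the increment into a tail integral over $(s,t)\times\R$ carrying the kernel $P_{t-r}^{(\varepsilon)}(\varepsilon[y/\varepsilon]-x)$, and a kernel-difference integral over $(0,s)\times\R$ carrying $P_{t-r}^{(\varepsilon)}(\varepsilon[y/\varepsilon]-x)-P_{s-r}^{(\varepsilon)}(\varepsilon[y/\varepsilon]-x)$. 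Applying (BDG) and the triangle inequality to these two pieces, using that $\sigma$ is Lipschitz and that $\sup_{\varepsilon\in(0,1)}\sup_{r\le T}\sup_{z\in\varepsilon\Z}\E(|\bar U_r^{(\varepsilon)}(z)|^k)<\infty$ by \cite{GJKMS}, the lemma reduces to showing
\begin{align*}
	\mathscr A &:=\frac1{\varepsilon^2}\int_s^t\!\d r\!\int_{-\infty}^\infty\!\d y\,
		\bigl[P_{t-r}^{(\varepsilon)}(\varepsilon[y/\varepsilon]-x)\bigr]^2 \le \frac{C(t-s)}{\varepsilon},\\
	\mathscr B &:=\frac1{\varepsilon^2}\int_0^s\!\d r\!\int_{-\infty}^\infty\!\d y\,
		\bigl[P_{t-r}^{(\varepsilon)}(\varepsilon[y/\varepsilon]-x)-P_{s-r}^{(\varepsilon)}(\varepsilon[y/\varepsilon]-x)\bigr]^2 \le \frac{C(t-s)}{\varepsilon},
\end{align*}
uniformly in $\varepsilon\in(0,1)$, $x\in\varepsilon\Z$ and $0<s<t\le T$; for then $\|\bar U_t^{(\varepsilon)}(x)-\bar U_s^{(\varepsilon)}(x)\|_k^2\le C(t-s)/\varepsilon$, and raising to the power $k/2$ gives the claim.

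For $\mathscr A$, I would first discretise the spatial integral: since $x\in\varepsilon\Z$ and $y\mapsto\varepsilon[y/\varepsilon]$ sends each $\varepsilon$-block onto a lattice point, $\int_{-\infty}^\infty[P^{(\varepsilon)}_u(\varepsilon[y/\varepsilon]-x)]^2\,\d y=\varepsilon\sum_{m\in\Z}[P^{(\varepsilon)}_u(m\varepsilon)]^2\le\varepsilon$, the last step being the Cauchy--Schwarz bound \eqref{eq:CS:P}. After the substitution $u=t-r$ this yields $\mathscr A\le\varepsilon^{-1}\int_0^{t-s}\d u=(t-s)/\varepsilon$.

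For $\mathscr B$, the same discretisation gives $\mathscr B=\varepsilon^{-1}\int_0^s\d r\sum_{m\in\Z}[P^{(\varepsilon)}_{t-r}(m\varepsilon)-P^{(\varepsilon)}_{s-r}(m\varepsilon)]^2$. Now I would apply Plancherel's identity on $\Z$ together with $\sum_m P^{(\varepsilon)}_\tau(m\varepsilon)\e^{im\theta}=\E\exp(i\theta X_{\tau/\varepsilon^\alpha})=\exp(-\tau\varepsilon^{-\alpha}[1-\hat\mu(\theta)])$, exactly as in the proof of Lemma \ref{lem:pbyeps}; this turns the inner sum into $\frac1{2\pi}\int_{-\pi}^\pi \e^{-2(s-r)\varepsilon^{-\alpha}[1-\hat\mu(\theta)]}\bigl|\e^{-(t-s)\varepsilon^{-\alpha}[1-\hat\mu(\theta)]}-1\bigr|^2\,\d\theta$. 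The key step is to perform the $r$-integral \emph{first}: since $1-\hat\mu(\theta)>0$ for $\theta\in[-\pi,\pi]\setminus\{0\}$ by Assumption \ref{ass}(i), $\int_0^s\e^{-2(s-r)\varepsilon^{-\alpha}[1-\hat\mu(\theta)]}\,\d r\le\varepsilon^\alpha/(2[1-\hat\mu(\theta)])$, whence
\begin{equation*}
	\mathscr B\le\frac{\varepsilon^{\alpha-1}}{4\pi}\int_{-\pi}^\pi
	\frac{\bigl|\e^{-(t-s)\varepsilon^{-\alpha}[1-\hat\mu(\theta)]}-1\bigr|^2}{1-\hat\mu(\theta)}\,\d\theta.
\end{equation*}
Finally, the elementary inequality $|\e^{-w}-1|^2\le(\min(w,1))^2\le w$ for $w\ge0$, applied with $w=(t-s)\varepsilon^{-\alpha}[1-\hat\mu(\theta)]$, cancels the denominator and leaves $\mathscr B\le\frac{\varepsilon^{\alpha-1}}{4\pi}\cdot\frac{t-s}{\varepsilon^\alpha}\int_{-\pi}^\pi\d\theta=\frac{t-s}{2\varepsilon}$. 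Together with the bound on $\mathscr A$ this finishes the reduction, and hence the proof.

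There is no deep obstacle here; the one point that needs care is the order of integration in $\mathscr B$. Integrating out $r$ first produces the factor $1-\hat\mu(\theta)$ in the denominator, which is then exactly cancelled by $|\e^{-w}-1|^2\le w$; this lets one avoid any case split in $t-s$ versus $\varepsilon^\alpha$ and any appeal to the precise local behaviour $1-\hat\mu(\theta)\asymp|\theta|^\alpha$—only the a.e.\ positivity of $1-\hat\mu$ on $[-\pi,\pi]$ from Assumption \ref{ass}(i) is used. The remaining subtlety is purely bookkeeping: every constant must be uniform in $\varepsilon\in(0,1)$, which is why one invokes the uniform-in-$\varepsilon$ moment bound on $\bar U^{(\varepsilon)}$ rather than a fixed-equation estimate.
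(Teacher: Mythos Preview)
Your proof is correct and follows essentially the same route as the paper's: the same two-term split via (BDG), the Cauchy--Schwarz bound \eqref{eq:CS:P} for the tail piece, and Plancherel followed by integrating out $r$ first and then cancelling the $1-\hat\mu$ denominator via $|1-\e^{-w}|^2\le w$ (the paper phrases this last inequality as $1-\e^{-x}\le\sqrt{x}$). Your remark that only Assumption~\ref{ass}(i), not the precise $|\theta|^\alpha$ behaviour, is needed for $\mathscr B$ is accurate and worth noting.
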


\begin{proof}
	Since $\sup_{ 0<\varepsilon<1} \sup_{x\in \varepsilon\Z} \sup_{t\le T}
	\E(|\bar{U}_t^{(\varepsilon)}(x)|^k)<\infty$, (BDG) implies that
	\begin{align}
		&\left\|\bar{U}_t^{(\varepsilon)}(x)-\bar{U}_s^{(\varepsilon)}(x)\right\|_k^2 \\
			\notag
		&\le \frac{C}{\varepsilon} \int_s^t 
			\sum_{j=-\infty}^\infty \left| P_{t-r}^{(\varepsilon)}(j\varepsilon -x)\right|^2\d r 
			+ \frac{C}{\varepsilon}\int_0^s \sum_{j=-\infty}^\infty
			\left| P_{t-r}^{(\varepsilon)}(j\varepsilon -x)-P_{s-r}^{(\varepsilon)}(j\varepsilon -x)
			\right|^2\d r.
	\end{align}
	The first term is bounded by $ C(t-s)/\varepsilon$ thanks to \eqref{eq:CS:P}. We appeal to 
	Plancherel's identity---using also the fact that $-1\le \hat\mu(z)\le 1$
	for all $z$---in order to obtain the following 
	equivalent representation of the corresponding second term:
	\begin{align}\notag
		&\frac{C}{\varepsilon} \int_0^s \d r \int_{-\pi}^{\pi}\d z \ 
			\left| \exp\left(-\frac{(t-r)[1-\hat{\mu}(z)]}{\varepsilon^{\alpha}}\right)-
			\exp\left(-\frac{(s-r)[1-\hat{\mu}(z)]}{\varepsilon^{\alpha}}\right)
			\right|^2 \\\notag
		&=\frac{C}{\varepsilon} \int_0^s \d r \int_{-\pi}^{\pi}\d z \ 
			\exp\left(-\frac{2(s-r)[1-\hat{\mu}(z)]}{\varepsilon^{\alpha}}\right)
			\left|1- \exp\left(-\frac{(t-s)[1-\hat{\mu}(z)]}{\varepsilon^{\alpha}}\right)
			\right|^2 \\
		&\le\frac{C}{\varepsilon} \int_{-\pi}^{\pi}
			\frac{\varepsilon^{\alpha}}{1-\hat{\mu}(z)}\cdot
			\left|1- \exp\left(-\frac{(t-s)[1-\hat{\mu}(z)]}{\varepsilon^{\alpha}}\right)
			\right|^2 \,\d z\\\notag
		&\le \frac{C(t-s)}{\varepsilon},
	\end{align}
	where we used the inequality $1-\e^{-x}\le \sqrt{x}$,
	valid for all $x\ge 0$.
\end{proof}

\begin{proof}[Proof of Theorem \ref{th:SA}] 
	The proof is a ready consequence of Theorem \ref{th:main} 
	and a standard application
	of the Kolmogorov continuity criterion \cite[Theorem 4.3, page 10]{minicourse},
	whose details follow.
	
	Choose and fix some integer $k\in[2\,,\infty)$.
	According to \cite[p.\ 566]{FK}, 
	\begin{equation}
		\E \left(| u_t(x)-u_s(x)|^k \right)\le C\vert t-s\vert^{\eta k},
	\end{equation}
	uniformly for all $x\in\R$ and $0\le s, t \le T$, where 
	\begin{equation}
		\eta:= \frac{\alpha-1}{2\alpha}.
	\end{equation}	
	Therefore, we appeal to the Kolmogorov continuity theorem and deduce the following
	bound: For every integer $k\ge 4/(3\eta)$,
	\begin{equation}
		\sup_{x\in\R}
		\E\left(\sup_{\substack{0\le s,t \le T \\ | t-s|<\varepsilon^3T}}
		| u_t(x)-u_s(x) |^k\right) \le C \varepsilon^{3\eta k-4},
	\end{equation}
	simultaneously for every $\varepsilon\in(0\,,1)$.
	Lemma \ref{lem:ubar:hold} and a second appeal to
	Kolmogorov's continuity theorem, together yield the following:
	\begin{equation}
		\sup_{x\in\varepsilon\Z}
		\E\left( \sup_{\substack{0\le s,t \le T \\ \vert t-s\vert<\varepsilon^3T}}\left| 
		\bar{U}^{(\varepsilon)}_t(x)-\bar{U}^{(\varepsilon)}_s(x)\right|^k\right)
		\le C \varepsilon^{k-4} \le C \varepsilon^{3\eta k-4},
	\end{equation}
	simultaneously for every $\varepsilon\in(0\,,1)$. The preceding
	two observations can be combined as follows. Define
	\begin{equation}
		D^{(\varepsilon)}_t(x) := u_t(x) - \bar{U}_t^{(\varepsilon)}(x).
	\end{equation}
	Then
	\begin{equation}\label{eq:DD}
		\sup_{x\in\varepsilon\Z}
		\E\left( \sup_{\substack{0\le s,t \le T \\ \vert t-s\vert<\varepsilon^3T}}\left| 
		D^{(\varepsilon)}_t(x)-D^{(\varepsilon)}_s(x)\right|^k\right)
		\le C \varepsilon^{3\eta k-4},
	\end{equation}
	simultaneously for every $\varepsilon\in(0\,,1)$.
	
	Next, we notice that
	\begin{equation}
		(0\,,T] \subseteq \bigcup_{1\le j\le \varepsilon^{-3}} \left( j\varepsilon^3T
		\,,(j+1)\varepsilon^3T\right],
	\end{equation}
	whence, for every fixed $\nu\in(0\,,\alpha-1)$,
	\begin{align}\notag
		&\P\left\{ \left| u_t(x) -\bar{U}_t^{(\varepsilon)}(x) \right| > 
			\varepsilon^{\nu/2} \text{ for some $x\in \varepsilon\Z, 
			|x|\le \varepsilon^{-M},\text{ and }t\in (0\,,T]$}\right\} \\
			\notag
		&\le 2\varepsilon^{-M-1}\sup_{x\in\varepsilon\Z}
			\P\left\{ \sup_{t\in(0,T]}\left| u_t(x) -\bar{U}_t^{(\varepsilon)}(x) \right| 
			> \varepsilon^{\nu/2} \right\}\\
		&\le J_1+J_2,
	\end{align}
	where
	\begin{equation}\begin{split}
		J_1&:= 2\varepsilon^{-M-4}\sup_{x\in\varepsilon\Z}\sup_{t\in(0,T]}
			\P\left\{ \left| u_t(x) -\bar{U}_t^{(\varepsilon)}(x) \right| 
			> \tfrac12\varepsilon^{\nu/2} \right\},\\
		J_2&:=2\varepsilon^{-M-1}\sup_{x\in\varepsilon\Z}
			\P\left\{\sup_{\substack{0<s,t\le T\\|t-s|
			<\varepsilon^3T}}\left| D_t^{(\varepsilon)}(x)
			-D^{(\varepsilon)}_s(x) \right| > \tfrac12\varepsilon^{\nu/2}\right\}.
	\end{split}\end{equation}
	Choose and fix $\rho\in(\nu\,,\alpha-1)$ and appeal to Theorem \ref{th:main},
	and the Chebyshev inequality, in order to see that
	\begin{equation}
		J_1 \le C \varepsilon^{-M-4+k(\rho-\nu)/2}\qquad\text{as 
		$\varepsilon\downarrow 0$},
	\end{equation}
	where $k\ge 2$ is an integer that we can choose [and fix] as large as we wish.
	And \eqref{eq:DD} ensures that
	\begin{equation}
		J_2 \le C \varepsilon^{-M-5+k(6\eta-\nu)/2}\qquad\text{as 
		$\varepsilon\downarrow 0$},
	\end{equation}
	for [say] the same integer $k$. Since $6\eta>\alpha-1>\nu$
	and $k$ can be chosen to be as large as
	needed, we have proved the following: For every $\nu\in(0\,,\alpha-1)$
	and $Q>0$,
	\begin{align}\notag
		&\P\left\{ \left| u_t(x) -\bar{U}_t^{(\varepsilon)}(x) \right| > 
			\varepsilon^{\nu/2} \text{ for some $x\in \varepsilon\Z, 
			|x|\le \varepsilon^{-M},\text{ and }t\in (0\,,T]$}\right\} \\
			\notag
		&\hskip3.5in=o( \varepsilon^Q)\qquad\text{as $\varepsilon\downarrow 0$}.
	\end{align}
	This is more than enough to imply the result.
\end{proof}

\begin{proof}[Proof of Theorem \ref{th:comp}] 
	Choose and fix an arbitrary positive constant $\beta$.
	Let $u$ be the solution to (SHE), and define
	\begin{equation}
		v_t(x) := u_{\beta t}(x)\qquad(t\ge 0\,, x\in\R).
	\end{equation}
	Then, $v$ solves the stochastic heat equation,
	\begin{equation}
		\frac{\partial}{\partial t} v_t(x) = 
		\beta(\mathcal{L}v_t)(x) +\beta^{1/2} \sigma(v_t(x))\tilde\xi(t\,,x),
	\end{equation}
	subject to $v_0(x)=u_0(x)$ for all $x$, where $\tilde\xi = \tilde{\xi}^{(\beta)}$ is a
	space-time white noise. This observation shows us that 
	Theorem \ref{th:comp} holds for a \emph{fixed} value of $\nu>0$
	if and only if Theorem \ref{th:comp} is valid for \emph{every} possible choice
	of $\nu>0$. We have seen (Examples \ref{ex:alpha=2}
	and \ref{ex:alpha<2}) that there exists $\nu>0$, together with
	a random walk model that satisfies Assumption \ref{ass} for that
	particular value of $\nu$. [In fact, every $\nu>0$ works
	when $\alpha=2$; see Example \ref{ex:alpha=2}.]
	We choose and fix that $\nu$ and random walk from now on.
	
	The remainder of the proof of the Theorem follows from 
	applying a result of Cox, Fleischmann, and Greven \cite[Theorem 1]{CFG}
	along with our Theorem \ref{th:main}. 
	Theorem 1 of \cite{CFG} is a comparison of moments for interacting diffusions. 
	For every fixed $\varepsilon,t>0$ and
	$x_1,x_2,\cdots,x_m \in \R$, that comparison theorem implies that
	\begin{equation}\label{eq:UU}
		\E\left(\prod_{j=1}^m U_t^{(\varepsilon)}\left(\varepsilon [x_j/\varepsilon]\right)\right)
		\le 
		\E\left(\prod_{j=1}^m U_t^{1,(\varepsilon)}\left(\varepsilon [x_j/\varepsilon]\right)\right)
	\end{equation}
	where $U_t^{1,(\varepsilon)}$ solves \eqref{mild_d} with 
	$\sigma$ replaced by $\bar{\sigma}$. Our assumption that
	$\sigma(0)=\bar{\sigma}(0)=0$ guarantees that both $U_t^{(\varepsilon)}$ 
	and $U_t^{1,(\varepsilon)}$ remain nonnegative;
	this follows from the well-known comparison principle
	for interacting diffusions; see for example, 
	Gei\ss\ and Manthey \cite[Theorem 1.1]{GeissManthey},
	Mueller \cite[Theorem 3.1]{Mueller}, or Georgiou et al
	\cite{GJKMS}. The support of $U_t^{(\varepsilon)}(x)$ and 
	$U_t^{1,(\varepsilon)}(x)$ is thus $\R_+$ and so the conditions of 
	Cox et al \cite[Theorem 1]{CFG} are verified. Now let
	$\varepsilon\downarrow 0$ in \eqref{eq:UU} and appeal to our Theorem \ref{th:main}
	to conclude the proof.
\end{proof}

\begin{proof}[Proof of Theorem \ref{th:lyapunov}]
	The upper bound of ${\rm Lip}_\sigma^4k^3/\nu$ follows
	immediately from Example 2.9 of Foondun and Khoshnevisan
	\cite{FK}. We will establish the lower bound next.
	Let $\theta\in\R$ be fixed,
	and consider  the solution $X_t(x)$ to the \emph{parabolic Anderson model},
	\begin{equation}\label{PAM}
		\frac{\partial}{\partial t} X_t(x) = \nu\frac{\partial^2}{\partial x^2}
		X_t(x) + {\rm L}_\sigma X_t(x)\xi(t\,,x),
	\end{equation}
	subject to $X_0(x)\equiv 1$ for all $x$. That is, 
	$(t\,,x)\mapsto X_t(x)$ solves  (SHE)
	with $\sigma(x)={\rm L}_\sigma x$.
	According  to Theorem \ref{th:comp},
	\begin{equation}
		\E\left( |u_t(x)|^k\right) \ge \E\left(\left| X_t(x)\right|^k\right)
		\ge\exp\left(\frac{{\rm L}_\sigma^4 k(k^2-1)t}{48\nu}\right),
	\end{equation}
	pointwise.
	See Theorem of 6.4 of Khoshnevisan \cite[Theorem 6.4]{Kh:CBMS}
	for the last bound. This proves the theorem.
\end{proof}

\noindent\textbf{Acknowledgements.}
	We thank Nicos Georgiou for many enjoyable conversations
	on this topic and Gennady Samorodnitsky for his helpful comments
	on the topic of Example \ref{ex:alpha<2}.
	Many thanks are also due to Ivan Corwin and Roger Tribe for their question about
	Lyapunov estimates; that question is answered here in Theorem \ref{th:lyapunov}.

\begin{small}\bigskip
\noindent\textbf{Mathew Joseph} (\texttt{m.joseph@sheffield.ac.uk}).\\
\noindent Department of Probability and Statistics, University of Sheffield,
	Sheffield, England, S3 7RH, UK\\
 
\noindent\textbf{Davar Khoshnevisan} (\texttt{davar@math.utah.edu}).\\
\noindent Department of Mathematics, University of Utah,
		Salt Lake City, UT 84112-0090, USA\\ 
		
\noindent\textbf{Carl Mueller} (\texttt{carl.2013@outlook.com}).\\
\noindent Department of Mathematics, University of Rochester,
	Rochester, NY 14627, USA\\
\end{small}


\begin{thebibliography}{}
%
\bibitem{BC} Bertini, Lorenzo and Nicoletta Cancrini, 
	The stochastic heat equation: Feynman-Kac formula and intermittence, 
	{\it J. Statist. Phys.} {\bf 78}{\it (5-6)} (1995) 1377--1401.
%
\bibitem{Bochner}
	Bochner, Salomon, 
	{\it Harmonic Analysis and the Theory of Probability},
	Dover Publication, Inc., Mineola, New York, 2005
	(Reprint of the original, published by the University of California Press,
	1955).
%
\bibitem{BorodinCorwin}
	Borodin, Alexei and Ivan Corwin,
	On the moments of the parabolic Anderson model,
	to appear in {\it Ann.\ Applied Probab.}
%
\bibitem{CFG}
	Cox, J. Theodore, Klaus Fleischmann and Andreas Greven, 
	Comparison of interacting diffusions and an application 
	to their ergodic theory,
	{\it Probab. Theory Related Fields} \ {\bf 105}{\it (4)} 
	(1996) 513--528.
%
\bibitem{CsorgoRevesz} Cs\"org\H{o}, Miklos and P\'al R\'ev\'esz,
	{\it Strong Approximations in Probability and Statistics},
	Academic Press, New York, 1981.
%
\bibitem{Dalang:99} Dalang, Robert C.,
	Extending the martingale measure stochastic integral with
	applications to spatially homogeneous s.p.d.e.'s,
	{\it Electron. J. Probab.}\ {\bf 4}, Paper no.\ 6 (1999) 29 pp.\
	(electronic). Available electronically at
	\url{http://www.math.washington.edu/~ejpecp}.
%
\bibitem{minicourse}
	Dalang, Robert, Davar Khoshnevisan, Carl Mueller,  David Nualart,  and Yimin Xiao, 
	A minicourse on stochastic partial differential equations
	(D. Khoshnevisan and F. Rassoul-Agha, Ed's)
	{\it Lecture Notes in Mathematics}\ {\bf 1962}, 2009.
%
\bibitem{BD}
Davis, Burgess,  On the $L^p$ norms of stochastic integrals and other martingales, {\it Duke Math. J. }
{\bf 43}{\it (4)} (1976) 697Ð704. 	
%
\bibitem{FK} Foondun, Mohammud and Davar Khoshnevisan,
	Intermittence and nonlinear parabolic stochastic partial
	differential equations, {\it Electr.\ J. Probab.}\ {\bf 14},
	Paper no.\ 12 (2009) 548--568 
	(electronic). Available electronically at
	\url{http://www.math.washington.edu/~ejpecp}.
%
\bibitem{Funaki} Funaki, Tadahisa,
	Random motion of strings and related stochastic evolution equations,
	{\it Nagoya Math. J.} {\bf 89} (1983) 129--193.	
%
\bibitem{GeissManthey} Gei\ss, Christel and Ralf Manthey,
	Comparison theorems for stochastic differential equations
	in finite and infinite dimensions,
	{\it Stoch.\ Processes and Their Applications} {\bf 53} (1994)
	23--35.
%
\bibitem{GJKMS} Georgiou, Nicos, Mathew Joseph, Davar Khoshnevisan,
	and Shang-Yuan Shiu, 
	Semi-discrete semi-linear parabolic SPDEs, to appear in {\it Ann.\ Applied Probab}.	
%
\bibitem{Glowacki} G\l owacki, Pawe\l,
	Stable semigroups of measures as commutative 
	approximate identities on nongraded homogeneous groups,
	{\it Invent. Math.} {\bf 83} (1986) 557--582.	 
%
\bibitem{GK1}Gy\"{o}ngy, I., Krylov, N.V., Accelerated numerical 
	schemes for PDEs and SPDEs, {\it Stochastic analysis} (2010) 131--168.

%
\bibitem{GK2} Gy\"{o}ngy, I., Krylov, N.V.,  On Splitting up method and 
	stochastic partial differential equations, {\it Ann. Probab. } {\bf 31} (2003) 564--591. 
%
\bibitem{GK3}Gy\"{o}ngy, I., Krylov, N.V., On the rate of 
	convergence of splitting-up approximations for SPDEs, 
	{\it Prog. Probab.} {\bf 56} (2003) 301--321.	
%
\bibitem{Kardar} Kardar, Mehran,
	Replica-Bethe Ansatz studies of two-dimensinoal interfaces with quenched
	random impurities,
	{\it Nucl.\ Phys.\ B} {\bf 290} (2007) 582--602.
%
\bibitem{Kh:LCSN} Khoshnevisan, Davar,
	L\'evy classes and self-normalization,
	{\it Electr.\ J. Probab.}\ {\bf 1}{\it (1)} (1996) 1--18.
%
\bibitem{Kh:CBMS} Khoshenvisan, Davar,
	{\it Analysis of Stochastic Partial Differential Equations},
	Conference Board of the Mathematical Sciences, American Mathematical
	Society, Providence RI (to appear).
%
\bibitem{LL} Lawler, Gregory F. and Vlada Limic,
	{\it Random Walk: A Modern Introduction}, 
	Cambridge University Press, Cambridge, UK, 2010.
	%
\bibitem{Mueller} Mueller, Carl, 
	On the support of solutions to the heat equation with noise, 
	{\it Stochastics Stochastics Rep.} {\bf 37} no. 4, (1991) 225--245.		
%
\bibitem{PZ} Peszat, Szymon, and Jerzy Zabczyk,
	Nonlinear stochastic wave and heat equations,
	{\it Probab.\ Theory Rel.\ Fields} {\bf 116} 421--443.
%
\bibitem{ShigaShimizu} Shiga, Tokuzo and Akinobu Shimizu,
	Infinite-dimensional stochastic differential equations and their applications,
	{\it J. Math.\ Kyoto U.}\ {\bf 20}{\it (3)} (1980) 395--416.
%
\bibitem{Spitzer} Spitzer, Frank, {\it Principles of Random Walk}
	 (second edition) Springer, New York, 1976.
%
\bibitem{Walsh} Walsh, John B.,
	\emph{An Introduction to Stochastic Partial Differential Equations},
	in: \'Ecole d'\'et\'e de probabilit\'es de Saint-Flour, XIV---1984,
	265--439,
	Lecture Notes in Math., vol.\ 1180, Springer, Berlin, 1986.
%


\end{thebibliography}
\end{document}